\documentclass[10pt]{amsart}

\usepackage{amssymb, amsmath, amsfonts, amsthm}
\usepackage{amsaddr}
\usepackage{color}

\newtheorem{defn}[equation]{Definition}
\newtheorem{lemma}[equation]{Lemma}

\newtheorem{thm}[equation]{Theorem}

\newcommand{\F}{F_4(2)}
\newcommand{\E}{E_6(2)}

\newcommand{\OO}{O_{10}^+(2)}
\newcommand{\LL}{L_6(2)}
\newcommand{\DD}{{}^3D_4(2)}
\newcommand{\ns}{\dot{\;}}
\newcommand{\sg}{\leqslant}
\newcommand{\sdp}{\colon\!}

\title{The Hurwitz subgroups of $E_6(2)$}
\author{Emilio Pierro$^\dag$}
\address{Fakult\"at f\"ur Mathematik, \\ Universit\"at Bielefeld, \\ 33602, Bielefeld, \\ Germany}

\curraddr{Centre for the Mathematics of Symmetry and Computation\\
School of Mathematics and Statistics\\
The University of Western Australia\\
35 Stirling Highway, Crawley, WA 6009, Australia.}
\email{emilio.pierro@uwa.edu.au}
\date{\today}

\begin{document}
\maketitle

\begin{abstract} We prove that the exceptional group $\E$ is not a Hurwitz group. In the course of proving this, we complete the classification up to conjugacy of all Hurwitz subgroups of $\E$, in particular, those isomorphic to $L_2(8)$ and $L_3(2)$.\end{abstract}

\section{Introduction}
Hurwitz groups are finite quotients of the triangle group
\smallskip
\[\Delta := \Delta(2,3,7) = \langle x, y, z \; | \; x^2 = y^3 = z^7 = xyz =1 \rangle\]
and interest in them stems from their connection to Riemann surfaces. A result due to Hurwitz \cite{hurwitz} states that if $\Sigma_g$ is a compact Riemann surface of genus $g > 1$ and $G$ is a group of conformal automorphisms of $\Sigma_g$, then the order of $G$ is bounded above by $84(g-1)$. Groups which attain this bound are known as Hurwitz groups and for a finite group $G$ to be a Hurwitz group, this is equivalent to $G$ being a finite quotient of $\Delta(2,3,7)$. We refer the reader to the most recent and most excellent survey on Hurwitz groups by Conder \cite{cond2} and the references therein for more information.

\smallskip 

Rather unexpectedly, the study of Hurwitz groups becomes extremely useful for capturing and classifying conjugacy classes of small non-abelian finite quasi-simple subgroups of a finite group $G$. The purpose of this paper is then twofold. Our main result is the following.

\begin{thm} \label{thm} Let $H \sg \E$ be a Hurwitz group. Then $H$ is isomorphic to $L_2(8)$, $L_3(2)$, $2^3 \ns L_3(2)$ or $\DD$. In particular, $\E$ is not a Hurwitz group.\end{thm}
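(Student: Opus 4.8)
The plan is to combine a character-theoretic count of $(2,3,7)$-triples in $\E$ with a reduction through its subgroup structure. Recall the Frobenius structure-constant formula: for conjugacy classes $X,Y,Z$ of a finite group $G$ and a fixed $z \in Z$, the number of pairs $(x,y) \in X \times Y$ with $xy = z$ is
\[
\sigma(X,Y,Z) = \frac{|X|\,|Y|}{|G|}\sum_{\chi \in \mathrm{Irr}(G)} \frac{\chi(x)\,\chi(y)\,\overline{\chi(z)}}{\chi(1)}.
\]
A Hurwitz subgroup of $G$ is exactly a subgroup generated by such a pair, where $X$ is a class of involutions, $Y$ a class of elements of order $3$, and $z = (xy)^{-1}$ has order $7$. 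So first I would enumerate the relevant conjugacy classes of $\E$ of orders $2$, $3$ and $7$ together with the corresponding character values, and evaluate $\sigma(X,Y,Z)$ for every admissible triple of classes.

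Second, I would organise the count as $\sigma = \sigma_G + \sigma_{\mathrm{prop}}$, where $\sigma_G$ tallies pairs generating all of $\E$ and $\sigma_{\mathrm{prop}}$ those landing in a proper subgroup. To compute $\sigma_{\mathrm{prop}}$ I would invoke the classification of maximal subgroups of $\E$ and track the fusion of the orders-$2,3,7$ classes into each maximal subgroup $M$; the relevant overgroups are those containing elements of order $7$, among them $\F$, $\LL$, $\OO$ and $\DD$. Inside each such $M$ the corresponding structure constants are evaluated using the much smaller character tables of $M$, and, by inclusion--exclusion over the overgroups of a fixed order-$7$ element, the contributions of proper subgroups are subtracted off to isolate the pairs that genuinely generate $G$.

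Third, having localised every $(2,3,7)$-pair, I would read off the isomorphism types that actually occur. Since the two smallest Hurwitz groups are $L_2(7) \cong L_3(2)$ and $L_2(8)$, and the only Hurwitz groups small enough to embed along this chain are these together with $2^3 \ns L_3(2)$ and $\DD$, the stated classification follows once the bookkeeping shows that no pair generates $\E$ itself, that is, $\sigma_G = 0$ for every admissible triple of classes; this simultaneously yields the conclusion that $\E$ is not a Hurwitz group.

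I expect the main obstacle to be two-fold. Practically, securing reliable character values for the large group $\E$ on the small classes, and correctly determining the fusion of those classes into the various maximal subgroups, is the delicate part: any slip in class fusion corrupts the inclusion--exclusion. Conceptually, the hard step is guaranteeing \emph{completeness} of the list of overgroups, so that every $(2,3,7)$-pair is accounted for and the residue attributed to $\E$ is provably zero rather than merely small.
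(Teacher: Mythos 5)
Your overall strategy---compute the Frobenius structure constants for all admissible class triples $(2X,3Y,7Z)$ and then show that every triple is absorbed by proper subgroups---is the same as the paper's, and your identification of the delicate points (class fusion and completeness) is accurate. However, your second step contains a genuine gap. You propose to isolate $\sigma_{\mathrm{prop}}$ by inclusion--exclusion over the \emph{maximal} overgroups of a fixed order-$7$ element. An exact count by this route requires the M\"obius function of the full lattice of overgroups of the generated subgroup, not just the maximals: a triple generating a small subgroup such as $L_3(2)$ lies in many maximal subgroups of $\E$, and the intersections of those maximals are themselves complicated groups whose structure constants you would then have to compute, recursively. The paper deliberately avoids this (and avoids relying on the maximal subgroup classification at all) by using the identity $n_G(2X,3Y,7Z) = \sum_{i} n_{H_i}(2X,3Y,7Z)/[N_G(H_i):H_i]$, where the sum runs over conjugacy class representatives $H_i$ of the \emph{generated} Hurwitz subgroups. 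This shifts the entire burden onto classifying, up to $\E$-conjugacy, the subgroups isomorphic to $L_2(8)$, $L_3(2)$, $2^3 \ns L_3(2)$ and $\DD$ together with their normalisers---which is where almost all of the paper's work lies (the local analysis of $7\sdp 3$ subgroups, the fusion in $T \cong (L_3(2)\times L_3(2)\times L_3(2))\sdp S_3$, and several explicit GAP computations of normalisers). Once the listed classes exhaust each structure constant exactly, no triple can generate anything else, including $\E$ itself.

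The second weak point is your third step: you assert that ``the only Hurwitz groups small enough to embed along this chain'' are the four named ones. This is not something you get for free; a priori a Hurwitz triple could generate some intermediate quasi-simple group. The paper grounds this in the determination of the isomorphism classes of simple subgroups of $\E$ (Kleidman--Wilson, Section 4 of their paper) and, more decisively, in the exhaustion argument above: since the known classes already account for the full value of every nonzero $n_E(2X,3Y,7Z)$, no further isomorphism types can occur. Without one of these two inputs your classification of possible isomorphism types is an assertion, not a proof.
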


In the course of proving the above theorem it becomes necessary to classify, up to conjugacy, subgroups of $\E$ of the isomorphism types named in Theorem \ref{thm}. These are then listed in Table \ref{e62table}. The majority of the work towards the proof of the above theorem and Table \ref{e62table} is due to Kleidman and Wilson \cite{maxe62} which we shall cite almost continuously. We note, however, that there is a slight error \cite[pg.288]{maxe62} where the authors identify $\F$ as a Hurwitz group. This is known to be false (see Table \ref{f42table}) and is corrected by Lemma \ref{ccd}. Following the suggestion of Wilson to the author, the proof of Theorem \ref{thm} does not rely on the main result of \cite{maxe62}. We are able to use results from \cite{maxe62} to prove almost everything, however for certain conjugacy classes of subgroups we rely on GAP \cite{gap} to determine the precise structure of their normalisers.

\smallskip 

In and of itself, Theorem \ref{thm} is a minor result in the study of Hurwitz groups. However, it is a necessary first step towards determining which members of the families of exceptional groups of Lie type of (twisted) rank at least $4$ are Hurwitz groups. For exceptional groups of (twisted) rank less than $4$, we summarise the known results in Table \ref{exchurw}. We also remark that for the groups $^2F_4(2^{2m+1})$, the condition appearing there is equivalent to the order of the group being divisible by $7$. In addition to those mentioned, the group $F_4(2)$ is known not to be a Hurwitz group, see for example Table \ref{f42table}. For general $q$, however, it is not known whether $F_4(q)$ is a Hurwitz group.

\begin{table}[h]\centering
\begin{tabular}{c | c c c}
$K$				&Hurwitz				&Reference\\ \hline
$^2B_2(2^{2m+1})$	&None				&--\\
$^2G_2(3^{2m+1})$	&$m \geq 1$			&Malle \cite{malle}, Jones \cite{rr}\\
$^2F_4(2^{2m+1})$	&$m \equiv 1$ mod $3$	&Malle \cite{mallesmallhurwitz}\\
$^3D_4(p^n)$		&$p \neq 3$, $p^n \neq 4$&Malle \cite{mallesmallhurwitz}\\
$G_2(p^n)$		&$p^n \geq 5$			&Malle \cite{malle}\\
\end{tabular}
\caption{Status of exceptional groups of Lie type as Hurwitz groups} \label{exchurw}
\end{table}

Our method, which is standard, is outlined in some detail in Section \ref{method}. In brief, given a finite group $G$, it is possible to determine the order of Hom$(\Delta,G)$ and then using knowledge of the subgroup structure of $G$, determine which elements of this set are in fact epimorphisms. In Section \ref{aux} we record various necessary preliminary results about the structure of $\E$ and in Section \ref{sc} we account for the elements of Hom$(\Delta,G)$. In the Appendix we record the conjugacy classes and normalisers of Hurwitz subgroups of the groups $\LL$, $\OO$ and $\F$ which will be necessary for the proof of Theorem \ref{thm}. We provide these without proof since they can be easily determined in GAP.

\smallskip

We use ATLAS \cite{atlas} notation throughout. In particular, we use the notation $nX$ to denote a conjugacy class of elements of order $n$, ordered alphabetically in decreasing size of their centraliser. If $nX$ and $nY$ are two conjugacy classes where elements from $nX$ are powers of elements of $nY$, then we write the union of these two classes as $nXY$. When we consider the normaliser of a subgroup $H$ in $\E$, we shall normally write this as $N_E(H)$ for brevity. A similar practice will be used for the subgroups isomorphic to $\F$, $\OO$, $\LL$ and $\DD$ with the obvious abbreviations.

\subsection*{Acknowledgements.} 
The author is grateful to Rob Wilson and Alastair Litterick for many helpful conversations in the preparation of this paper and to Marston Conder for assistance in the verification of these results. The author also thanks the referee for their thorough reading of the manuscript.

This work was supported by the SFB 701 and by ARC grant DP140100416.

\section{Preliminaries}\label{method}
The techniques we employ are standard and `well-known' to those in the area. We include a description of them here so as to be as self-contained as possible. We begin with some terminology.

\begin{defn} Let $G$ be a finite group and let $2X$, $3Y$ and $7Z$ be $G$-conjugacy classes of elements. If there exists $x \in 2X$, $y \in 3Y$ and $z \in 7Z$ such that $xyz=1$, then we call $(x,y,z)$ a Hurwitz triple and $H = \langle x,y,z \rangle$ a Hurwitz subgroup of type $(2X,3Y,7Z)$, or usually a $(2X,3Y,7Z)$-subgroup.\end{defn}

In order to count the number of elements of the set
\[\mathcal{H} = \{(x, y, z) \in G \times G \times G \mid x^2=y^3=z^7=xyz=1\}\]
it is necessary to determine the structure constants of $G$. Note that one can naturally identify elements of $\mathcal{H}$ with elements of Hom$(\Delta,G)$. For this we use the well known structure constant formula due to Frobenius \cite{frob} as follows. If $C_1$, $C_2$ and $C_3$ are three not necessarily distinct conjugacy classes in a finite group $G$, then the order of the set
\[\mathcal{C} = \{(x,y,z) \in C_1 \times C_2 \times C_3 \mid xyz=1\}\]
is given by the following formula
\[\vert \mathcal{C} \vert = \frac{\vert C_1 \vert \vert C_2 \vert \vert C_3 \vert}{\vert G \vert} \sum_{\chi \in \mathrm{Irr}(G)}\frac{\chi(x)\chi(y)\chi(z)}{\chi(1)}.\]
Note that we give the specific form for three conjugacy classes but such a formula exists for any finite number of conjugacy classes. In practice, we shall `normalise' this constant and define
\[n_G(C_1,C_2,C_3)=\frac{\vert \mathcal{C} \vert}{\vert G \vert}.\]
It will also be necessary to determine the structure constants within Hurwitz subgroups of $G$. If $H$ is a Hurwitz subgroup of $G$ of type $(2X,3Y,7Z)$, where $2X$, $3Y$ and $7Z$ are $G$-conjugacy classes, then, by abuse of notation, we denote by $n_H(2X,3Y,7Z)$ the sum over all combinations of normalised structure constants $n_H(C_i,C_j,C_k)$ where $C_i$, $C_j$ and $C_k$ are $H$-conjugacy classes of elements which fuse to the $G$-classes $2X$, $3Y$ and $7Z$ respectively. Since ultimately we shall only deal with four isomorphism classes of Hurwitz subgroups of $G$, this should not cause too much confusion.

Given a Hurwitz triple $(x,y,z)$ in $G$ of type $(2X,3Y,7Z)$ this clearly generates a unique Hurwitz subgroup $H = \langle x,y \rangle \sg G$. Letting $H_i$ for $i \in I$ denote the $G$-conjugacy class representatives of $(2X,3Y,7Z)$-subgroups, we obtain the following formula
\[n_G(2X,3Y,7Z) = \sum_{i \; \in \; I} \frac{n_{H_i}(2X,3Y,7Z)}{[N_G(H_i):H_i]}.\]

\section{Auxiliary structural results}\label{aux}
In this section we collect a number of relevant results from \cite{maxe62} and prove further results that will be necessary for classifying normalisers in $\E$ of Hurwitz subgroups. We begin with some local analysis.

We reproduce from \cite[Table 2]{maxe62} the conjugacy classes and normalisers of elements of orders $2$, $3$ and $7$ in $\E$ for reference.
\begin{align*}
N_E(2A) &\cong 2^{1+20}_+ \sdp \LL\\
N_E(2B) &\cong [2^{24}] \sdp S_6(2) \sg 2^{16} \sdp \OO\\
N_E(2C) &\cong [2^{27}] \sdp (L_2(2) \times L_3(2)) \sg 2^{2+9+18} \sdp (L_2(2) \times L_3(2) \times L_3(2))\\
N_E(3A) &\cong L_2(2) \times \LL\\
N_E(3B) &\cong (3 \times O_8^-(2)) \sdp 2 \sg \OO\\
N_E(3C) &\cong 3 \ns (3^2 \sdp Q_8 \times L_3(4)) \sdp S_3\\
N_E(7AB) &\cong (7 \times \DD) \sdp 3\\
N_E(7C) &\cong (7 \sdp 3 \times L_3(2) \times L_3(2)) \sdp 2 \sg (L_3(2) \times L_3(2) \times L_3(2)) \sdp S_3\\
N_E(7D) &\cong (7^2 \sdp 3 \times L_3(2)) \sdp 2 \sg (L_3(2) \times L_3(2) \times L_3(2)) \sdp S_3
\end{align*}

\smallskip

The character table of $\E$ is known and can be found in GAP. The non-zero normalised $(2,3,7)$-structure constants are then as follows.
\begin{align*}
n_E(2A,3A,7C)	&= 1/28224	\quad & \quad n_E(2C,3A,7C)&=11/192\\
n_E(2B,3A,7C) &=1/288		\quad & \quad n_E(2C,3B,7D)&=43/24\\
n_E(2B,3B,7AB) &=1/6048	\quad & \quad n_E(2C,3C,7C)&=15/7\\
n_E(2B,3B,7D) &= 5/56		\quad & \quad n_E(2C,3C,7D)&=137/21
\end{align*}

\smallskip

An important class of subgroups of $\E$ are those isomorphic to $L_3(2)^3$ whose existence and uniqueness up to conjugacy follows from \cite[Propositions 2.3, 5.4]{maxe62}. For the rest of the paper we denote by $T$ the normaliser in $\E$ of such a subgroup, that is
\[T \cong (L_3(2) \times L_3(2) \times L_3(2)) \sdp S_3.\]
Note that the action of an element of order $3$ in $T \setminus T''$ permutes the factors of $T''$ cyclically, whereas an element of order $2$ acts as an outer automorphism on each factor of $T''$ and interchanges two of its factors \cite[Proposition 2.3]{maxe62}. The fusion of elements of $T$ of orders $3$ or $7$ is given in \cite[Lemma 2.6]{maxe62} and \cite[Lemma 8.3]{maxe62}. To summarise, let $t$ be an element of order $3$ or $7$ in $T''$. \begin{enumerate}
\item If $t$ projects non-trivially onto one factor of $T''$, then $t \in 3A$ or $7C$.
\item If $t$ projects non-trivially onto two factors of $T''$, then $t \in 3B$, $7AB$ or $7D$.
\item If $t$ projects non-trivially onto three factors of $T''$, then $t \in 3C$, $7C$ or $7D$.
\end{enumerate}

The following lemma determines the fusion of a subgroup of $T$ isomorphic to $L_3(2)$ using the above information.

\begin{lemma} \label{tsubgroups} Let $H$ be a subgroup of $T$ with $H \cong L_3(2)$. Then $H$ is of one of the following $\E$-types: $(2A,3A,7C)$, $(2B,3B,7AB)$, $(2B,3B,7D)$, $(2C,3C,7C)$ or $(2C,3C,7D)$.\end{lemma}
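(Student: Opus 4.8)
The plan is to reduce at once to the case $H \sg T''$ and then to determine the three relevant $\E$-classes one prime at a time. Since $H \cong L_3(2)$ is perfect while $T/T'' \cong S_3$ is soluble, the image of $H$ in $T/T''$ is trivial, so $H \sg T'' = L_3(2) \times L_3(2) \times L_3(2)$. Writing $\pi_i$ for the projection onto the $i$-th factor, simplicity of $H$ forces each $\ker \pi_i \trianglelefteq H$ to be $1$ or $H$; comparing orders, $\pi_i$ is therefore either trivial or an isomorphism onto the $i$-th factor. Let $k \in \{1,2,3\}$ be the number of indices for which $\pi_i$ is nontrivial. Because each nontrivial $\pi_i$ is injective and each trivial one kills all of $H$, \emph{every} nonidentity element of $H$ projects nontrivially onto exactly the same $k$ factors; in particular this holds for the elements of order $3$ and $7$.

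First I would settle the classes of the elements of orders $3$ and $7$. Applying the stated trichotomy (the cases according to whether an element of order $3$ or $7$ in $T''$ projects onto one, two or three factors) to the order-$3$ and order-$7$ elements of $H$ gives: for $k=1$, classes $3A$ and $7C$; for $k=2$, class $3B$ and $7AB$ or $7D$; and for $k=3$, class $3C$ and $7C$ or $7D$.

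The main work, and the main obstacle, is to pin down the class of the involutions of $H$, since the trichotomy governs only orders $3$ and $7$. Here I would first note that, as $L_3(2)$ is transitive on its own involutions within each factor and the top $S_3$ permutes the factors, the involutions of $T''$ projecting onto a fixed number $k$ of factors form a single $T$-class, hence a single $\E$-class; it thus suffices to determine one class for each $k$. For $k=1$ the group $H$ is a single factor, so the complementary $L_3(2) \times L_3(2)$ centralises $H$ and hence each of its involutions; as $7^2$ divides $168^2$, the centraliser order of such an involution is divisible by $7^2$, and inspecting the tabulated normalisers shows that $7^2$ divides $\vert N_E(2X)\vert$ only for $X=A$. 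Hence the involutions lie in $2A$, giving type $(2A,3A,7C)$. For $k=3$ the order-$3$ elements lie in $3C$, and among the nonzero structure constants listed above $3C$ occurs only paired with $2C$, forcing the involutions into $2C$ and giving types $(2C,3C,7C)$ and $(2C,3C,7D)$.

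The case $k=2$ is the delicate one. Here $3B$ pairs, among the nonvanishing structure constants, only with $2B$ or $2C$, so the involutions lie in $2B$ or $2C$. To exclude $2C$ I would use the existence (from the two diagonal embeddings, via \cite[Lemma 8.3]{maxe62}) of a $k=2$ subgroup whose order-$7$ elements lie in $7AB$: for it $n_E(2C,3B,7AB)=0$ while $n_E(2B,3B,7AB)\neq 0$, so a genuine Hurwitz triple forces its involutions into $2B$; by the single-class statement above, every $k=2$ subgroup then has its involutions in $2B$. This yields types $(2B,3B,7AB)$ and $(2B,3B,7D)$, and assembling the three cases produces exactly the five $\E$-types claimed.
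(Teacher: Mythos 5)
Your proposal is correct and follows the same skeleton as the paper (reduce to $T''$, split by the number $k$ of factors onto which $H$ projects, read off the $3$- and $7$-classes from the fusion trichotomy, and discard types with vanishing structure constant), but it diverges at the one genuinely delicate point, namely pinning down the involution class. For $k=1$ the paper argues that $C_E(2B)$ and $C_E(2C)$ do not contain $L_3(2)\times L_3(2)$; your divisibility-by-$7^2$ test is a clean, checkable way of establishing exactly that, so this is only cosmetically different. For $k=2$ the paper excludes $(2C,3B,7D)$ by appealing to \cite[Proposition 8.4]{maxe62} (such a subgroup cannot be centralised by an $L_3(2)$, whereas a two-factor subgroup of $T''$ is centralised by the third factor); you instead observe that all support-$2$ involutions of $T''$ form a single $T$-class, hence a single $\E$-class, exhibit one two-factor subgroup whose order-$7$ elements lie in $7AB$, and use $n_E(2C,3B,7AB)=0$ to force that class into $2B$. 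This is a nice, more self-contained argument that trades the appeal to Proposition 8.4 for a character-table vanishing plus a single-class observation. The one thing to be careful about: your argument genuinely \emph{needs} the existence of a two-factor subgroup with order-$7$ elements in $7AB$, which does not follow from the ``if $t$ projects onto two factors then $t\in 7AB$ or $7D$'' trichotomy alone; it requires the finer fusion statement of \cite[Lemma 8.3]{maxe62} (or, equivalently, the later verification that $(2B,3B,7AB)$-subgroups really do occur inside $T$). Since you cite that lemma, the argument stands, but the dependency should be made explicit; the paper's route via Proposition 8.4 avoids needing any existence statement.
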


\begin{proof} Let $H \cong L_3(2) \sg T$. We may immediately reduce to the case that the structure constant corresponding to the type of $H$ is non-zero.
If $H$ projects onto one factor of $T''$, then $H$ contains subgroups isomorphic to $7C \sdp 3A$. Since neither the centralisers of $2B$- nor $2C$-involutions contain $L_3(2) \times L_3(2)$, $H$ must be of type $(2A,3A,7C)$. 
If $H$ projects onto two factors, then $H$ contains subgroups isomorphic to $7AB \sdp 3B$ or $7D \sdp 3B$ and hence $H$ is of type $(2B,3B,7AB)$, $(2B,3B,7D)$ or $(2C,3B,7D)$. However, type $(2C,3B,7D)$ cannot occur by \cite[Proposition 8.4]{maxe62} since the centraliser in $\E$ of $H$ contains a subgroup of shape $L_3(2)$.
Finally, if $H$ projects onto three factors of $H$, then $H$ contains subgroups isomorphic to $7C \sdp 3C$ or $7D \sdp 3C$, hence $H$ must be of type $(2C,3C,7C)$ or type $(2C,3C,7D)$.\end{proof}

In the sequel, we shall need to classify many conjugacy classes of subgroups of $\E$ isomorphic to $L_3(2) \cong L_2(7)$. In particular, the analysis of subgroups isomorphic to $7 \sdp 3$ will be crucial. We summarise the necessary results in the following lemma.

\begin{lemma} \label{parafusion} Let $P \cong 7 \sdp 3 \sg L_3(2) \sg \E$.\begin{enumerate}
\item If elements of order $7$ in $P$ fuse to $\E$-class $7C$, then either
\begin{enumerate}
\item $P \cong 7C \sdp 3A$ and $N_E(7C \sdp 3A) \cong (7 \sdp 3 \times L_3(2) \times L_3(2)) \sdp 2$ or
\item $P \cong 7C \sdp 3C$ and $N_E(7C \sdp 3C) \cong	3 \times ((3 \times (7 \sdp 3)) \sdp 2)$
\end{enumerate}
\item If elements of order $7$ in $P$ fuse to $\E$-class $7D$, then either
\begin{enumerate}
\item $P \cong 7D \sdp 3B$ and $N_E(7D \sdp 3B) \cong (7 \sdp 3 \times L_3(2)) \sdp 2$, or
\item $P \cong 7D \sdp 3C$ and $N_E(7D \sdp 3C) \cong 7 \sdp 6 \times 3$
\end{enumerate}
\end{enumerate}
\end{lemma}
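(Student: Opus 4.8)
The plan is to exploit two facts about $P \cong 7 \sdp 3$: first, that $\langle t \rangle$ is the unique Sylow $7$-subgroup of the Frobenius group $P$, hence characteristic in it, so that $N_E(P) \sg N_E(\langle t \rangle)$; and second, that since $P \sg L_3(2)$, the overgroup $M \cong L_3(2) \cong L_2(7)$ is itself a Hurwitz group, and so is a $(2X,3Y,7Z)$-subgroup of $\E$ in the sense of the opening definition. Since $L_3(2)$ has a single class of involutions, a single class of elements of order $3$, and its order-$7$ elements fuse in $\E$, the element $t$ lies in a fixed class $7Z$ and every order-$3$ element of $P$ lies in a fixed class $3Y$, with $M$ generated by a triple of type $(2X,3Y,7Z)$. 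Consequently $n_E(2X,3Y,7Z) \neq 0$ for some involution class $2X$, and the admissible pairs $(3Y,7Z)$ are constrained by the displayed list of nonzero structure constants.

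First I would read off the order-$3$ classes that can accompany each order-$7$ class. The only nonzero constants involving $7C$ are $n_E(2A,3A,7C)$, $n_E(2B,3A,7C)$, $n_E(2C,3A,7C)$ and $n_E(2C,3C,7C)$, so if $t \in 7C$ then $g \in 3A$ or $g \in 3C$; the only nonzero constants involving $7D$ are $n_E(2B,3B,7D)$, $n_E(2C,3B,7D)$ and $n_E(2C,3C,7D)$, forcing $g \in 3B$ or $g \in 3C$ when $t \in 7D$. This rules out the a priori conceivable combinations $7C \sdp 3B$ and $7D \sdp 3A$ and isolates exactly the four cases in the statement (the remaining admissible type $7AB \sdp 3B$, coming from $n_E(2B,3B,7AB)$, falls outside the hypothesis since we consider only $7C$ and $7D$). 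The role of the assumption $P \sg L_3(2)$ is precisely to force the overgroup to be a Hurwitz subgroup whose type is restricted in this way.

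Next I would reduce the normaliser computation to an internal calculation. Because $\langle t \rangle$ is generated by a $7C$- or $7D$-element, the table of element normalisers gives $N_E(\langle t \rangle) \cong N_E(7C) \sg T$ or $N_E(\langle t \rangle) \cong N_E(7D) \sg T$, so that $N_E(P) = N_{N_E(7Z)}(P)$ may be computed inside the explicitly known group $N_E(7Z)$, with $P$ conjugate into $T$. For the two cases with large centraliser the answer is forced structurally: when $t \in 7C$ and $g \in 3A$ the element $g$ projects nontrivially to a single factor of $T''$, so $P$ is the first direct factor of $N_E(7C) \cong (7 \sdp 3 \times L_3(2) \times L_3(2)) \sdp 2$, which is preserved by the outer involution interchanging the two $L_3(2)$ factors, giving $N_E(P) \cong N_E(7C)$; when $t \in 7D$ and $g \in 3B$, only the subgroup $\langle t \rangle$ of the natural $7^2$ in $N_E(7D) \cong (7^2 \sdp 3 \times L_3(2)) \sdp 2$ survives into the normaliser, reducing the $7^2$ to a single $7$ and yielding $N_E(P) \cong (7 \sdp 3 \times L_3(2)) \sdp 2$.

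Finally, for the two diagonal cases $7C \sdp 3C$ and $7D \sdp 3C$, in which $g$ projects nontrivially onto all three factors of $T''$ and hence has small centraliser, I would pin down the structures $3 \times ((3 \times (7 \sdp 3)) \sdp 2)$ and $7 \sdp 6 \times 3$ by direct computation in GAP, using the embedding $N_E(7Z) \sg T$ to locate $P$ and evaluate $N_{N_E(7Z)}(P)$. I expect this to be the main obstacle: the exclusion of the spurious classes and the two large cases are dictated by the structure-constant table and the explicit shapes of $N_E(7C)$ and $N_E(7D)$, but determining the two small $3C$-normalisers precisely — and confirming that each admissible type is realised by a single conjugacy class of $7 \sdp 3$-subgroups — is most reliably carried out computationally, in keeping with the stated use of GAP for exactly such normaliser calculations.
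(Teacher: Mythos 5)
Your proposal reaches the correct conclusions but by a genuinely different route from the paper's. To exclude the types $7C \sdp 3B$ and $7D \sdp 3A$, the paper does not argue from the structure constants of the overgroup directly: it first places $P$ inside a conjugate of $T''$ (possible because $N_E(7C)$ and $N_E(7D)$ both lie in $T$) and then invokes Lemma \ref{tsubgroups}, whose projection-and-fusion analysis ties the class of $y$ to the class of $z$ through the number of factors of $T''$ that support them. Your alternative --- that the ambient $L_3(2)$ is a Hurwitz group with a single class of order-$3$ elements and with both of its classes of order-$7$ elements fusing to a single rational $\E$-class, so that its type must carry a nonvanishing structure constant --- is valid and rather more self-contained, but it discards exactly the structural data (the projections onto the factors of $T''$) that the paper then exploits to read off the normalisers. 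Thus where the paper computes $N_E(7C \sdp 3A)$ and $N_E(7C \sdp 3C)$ by inspection inside $T$ (for the $3C$ case: the only elements of $T''$ centralising $P$ have order $3$, and $C_E(P)$ is normalised by involutions but not by $3$-elements of $T \setminus T''$), and cites the proof of Kleidman--Wilson, Proposition 8.4 for all of part (2), you fall back on GAP for both $3C$ cases. That is defensible given the paper's own use of GAP elsewhere, but it loses the short structural explanation available for $7C \sdp 3C$.

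The one step that would not survive scrutiny as written is your treatment of $7D \sdp 3B$: the assertion that ``only the subgroup $\langle t \rangle$ of the natural $7^2$ survives'' is precisely the point at issue, not a consequence of the shape of $N_E(7D)$. An element $s$ of the $7^2$ normalises $P$ if and only if $[y,s] \in \langle t \rangle$, i.e.\ if and only if the image of $s$ in $7^2/\langle t \rangle$ is fixed by $y$; a priori the order-$3$ automorphism induced by $y$ could act trivially on that quotient (it need not be fixed-point-free on all of $7^2$), in which case $N_E(P)$ would contain $7^2 \sdp 3$ and your claimed structure would be wrong. You need to show $C_{7^2}(y) = 1$, or else do what the paper does and take this case from the proof of \cite[Proposition 8.4]{maxe62} (or fold it into the same GAP verification you already propose for the $3C$ cases).
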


\begin{proof} Let $P \cong 7 \sdp 3$ where $z \in P$ belongs to $7C$ or $7D$ and $y \in P$ has order $3$. Let $T'' \cong K_1 \times K_2 \times K_3$ where $K_i \cong L_3(2)$. Since $P$ is contained in a conjugate of $T''$ we see from Lemma \ref{tsubgroups} that $P$ is isomorphic to one of the four types given.
For the proof of (1), suppose that $z \in 7C$. Without loss of generality we can assume that $z$ projects non-trivially onto only one factor of $T''$, say $K_1$. If $y \in 3A$, then, since $y$ also projects non-trivially onto $K_1$, the structure of $N_E(P)$ is clear. Now suppose that $y \in 3C$ so that $y$ projects non-trivially onto all three factors of $T''$. It is clear that the only elements of $T''$ centralising $P$ are elements of order $3$ and $C_E(P)$ is normalised by elements of order $2$ but not $3$ in $T \setminus T''$. The proof of part (2) is contained in the proof of \cite[Proposition 8.4]{maxe62}, which completes the proof.\end{proof}

We now turn to large simple subgroups of $\E$. The isomorphism classes of simple subgroups of $\E$ are determined in \cite[Section 4]{maxe62}. Those which will be of most importance to us are those isomorphic to $\F$, $\OO$, $\LL$ and $\DD$. From \cite[Section 6]{maxe62} we see that subgroups of $\E$ isomorphic to $\F$ are all conjugate, as are those isomorphic to $\OO$ or $\LL$. Subgroups isomorphic to $\DD$ fall into two conjugacy classes and any $\DD$ subgroup is contained in a subgroup isomorphic to $\F$. The fusion of conjugacy classes of $\F$, $\OO$ and $\LL$ into $\E$ are given in Table \ref{fusion}. These can easily be determined by the power maps given in \cite{atlas} and by GAP. The fusion of conjugacy classes of $\DD$ into $\F$ can also be determined. This is necessary to prove the following lemma.

\begin{lemma} \label{3d4comm} Let $H \cong \DD \sg \E$. If $N_E(H) \cong (7 \times \DD) \sdp 3 \sg \E$, then a Hurwitz generating triple of $H$ is of $\E$-type $(2C,3C,7C)$.\end{lemma}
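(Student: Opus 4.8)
The plan is to determine the $\E$-class of each member of a Hurwitz generating triple $(x,y,z)$ of $H$ by exploiting the centralising $7$-element supplied by the hypothesis. First I would identify that $7$-element: writing $N_E(H) \cong (7 \times \DD)\sdp 3$, the central factor $\langle t\rangle$ of order $7$ satisfies $H \sg C_E(t)$, so $t$ centralises all of $H$ and in particular commutes with each of $x$, $y$ and $z$. Since $13 \mid |H|$, and among the three candidate centralisers $C_E(7AB) = 7 \times \DD$, $C_E(7C) = 7 \times L_3(2) \times L_3(2)$ and $C_E(7D) = 7^2 \times L_3(2)$ (read off from the normaliser list) only $|C_E(7AB)|$ is divisible by $13$, the element $t$ must lie in class $7AB$; this is exactly the identification $N_E(7AB) \cong (7 \times \DD)\sdp 3$, and it pins down $H$ as the copy of $\DD$ sitting inside $C_E(t)$. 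Consequently $x, y, z \in C_E(t) = 7 \times H$, so their $\E$-classes are governed entirely by the fusion of $H$-classes into $\E$, which I would compute through the chain $\DD \hookrightarrow \F \hookrightarrow \E$ using the $\DD \to \F$ fusion together with Table \ref{fusion}.

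The central step is the order-$7$ entry. Since $\DD \cong {}^3D_4(2)$ is a Hurwitz group, I would first read off from its character table (in GAP) which internal $\DD$-classes of elements of orders $2$, $3$ and $7$ are carried by its Hurwitz triples, and then fuse these into $\E$. For the claim it suffices to show $z \in 7C$. Here I would use that $z$ lies in the self-centralising $7^2$-torus of $\DD$, so that $\langle t\rangle \times C_H(z)$ is a $7$-group of rank $3$ inside $C_E(z)$, and then locate $z$ and the commuting $7AB$-element $t$ inside a copy of $T'' \cong L_3(2)^3$ via the projection rules preceding Lemma \ref{tsubgroups}. The expected outcome is $z \in 7C$: its centraliser $7 \times L_3(2)^2 \sg T''$ contains, by rule (2), order-$7$ elements projecting onto the two remaining $L_3(2)$-factors which lie in class $7AB$, furnishing precisely the commuting $7AB$-element demanded by the hypothesis, whereas $7AB$ and $7D$ can be excluded (the former because $C_E(7AB) = 7 \times \DD$ would force a second $\DD$ centralising an element of $H$, the latter by the finer placement of $z$ in $T''$).

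Once $z \in 7C$ is established, the list of non-vanishing $(2,3,7)$-structure constants of $\E$ leaves only the $\E$-types $(2A,3A,7C)$, $(2B,3A,7C)$, $(2C,3A,7C)$ and $(2C,3C,7C)$ as a priori possibilities for the triple. Since $x, y$ are elements of $H$ lying in $C_E(t) = 7 \times H$, their $\E$-classes are determined by the same $\DD \to \F \to \E$ fusion, which I would compute to give $x \in 2C$ and $y \in 3C$; as a cross-check, the resulting type $(2C,3C,7C)$ is the unique survivor among the four, and is consistent with $n_E(2C,3C,7C) = 15/7 \neq 0$. I would also confirm that no Hurwitz triple of $H$ can be of type $(\,\cdot\,,3A,7C)$, since the internal $\DD$-classes used by Hurwitz triples of ${}^3D_4(2)$ do not fuse to $3A$ under the composite map.

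The main obstacle I anticipate is the fusion of the order-$7$ (and, secondarily, order-$3$) classes: distinguishing $7C$ from $7D$ cannot be settled by centraliser order or $7$-rank alone, since both centralisers have the shape $7^{(\cdot)} \times L_3(2)^{(\cdot)}$ of rank $3$ in their Sylow $7$-subgroup. The decisive input will be the careful placement of $z$ together with its commuting $7AB$-partner inside $T'' \cong L_3(2)^3$ via the projection rules of Lemma \ref{tsubgroups}, supplemented by a GAP computation inside $N_E(7AB)$ (or inside a containing $\F$) to verify the delicate $7C$-versus-$7D$ and $3C$-versus-$3A$ separations — exactly the kind of normaliser-and-fusion verification for which the paper already relies on GAP.
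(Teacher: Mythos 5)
Your overall skeleton matches the paper's: identify the central order-$7$ element $t$ with $C_E(t) \cong 7 \times \DD$ (so $t \in 7AB$), use Malle's result (or a computation in $\DD$) to pin the internal type of a Hurwitz triple of $H \cong \DD$ to $(2B,3B,7D)$, push this through the fusion $\DD \to \F \to \E$ to reduce to the two candidates $(2C,3C,7C)$ and $(2C,3C,7D)$, and then resolve the $7C$-versus-$7D$ ambiguity. The gap is at that last, decisive step. Your proposed criterion --- that $z \in 7C$ because it commutes with a $7AB$-element supplied by the two untouched factors of $T''$ --- is not shown to separate the two classes: a $7D$-element projecting onto two factors of $T'' \cong L_3(2)^3$ also centralises a subgroup $7 \times 7 \times L_3(2)$ containing many order-$7$ elements projecting onto two or three factors, and the projection rules preceding Lemma \ref{tsubgroups} only say such elements lie in $7AB$ \emph{or} $7D$ (resp.\ $7C$ or $7D$); so the existence of a commuting $7AB$-element is not obviously a property of $7C$ alone. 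Likewise your observation that $\langle t\rangle \times C_H(z) \cong 7^3 \sg C_E(z)$ is consistent with both options, since the Sylow $7$-subgroups of $C_E(7C) \cong 7 \times L_3(2) \times L_3(2)$ and of $C_E(7D) \cong 7^2 \times L_3(2)$ are both elementary abelian of rank $3$ --- as you yourself note. At that point you fall back on an unspecified GAP computation inside $N_E(7AB)$, i.e.\ the crux is asserted rather than proved.

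The paper closes this gap without further computation by changing the object whose normaliser is examined: instead of $C_E(z)$ it uses $N_E(P)$ for the Frobenius subgroup $P \cong 7 \sdp 3$ of $H$. One first shows that $P$ is, up to conjugacy, the unique subgroup $7D \sdp 3A$ of $\DD$, and that its order-$3$ elements fuse to $\E$-class $3A$ or $3B$; Lemma \ref{parafusion} then leaves exactly two candidates, namely $7C \sdp 3A$ with normaliser $(7\sdp 3 \times L_3(2) \times L_3(2)) \sdp 2$, whose order has $7$-part $7^3$, and $7D \sdp 3B$ with normaliser $(7 \sdp 3 \times L_3(2)) \sdp 2$, whose order has $7$-part only $7^2$. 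Since $P$ lies in the $L_3(2)$ direct factor of $C_D(7ABC) \cong 7 \times L_3(2)$, the centraliser $C_E(P)$ contains the elementary abelian $7^2$ generated by $t$ and that order-$7$ direct factor, so $N_E(P)$ has $7$-part at least $7^3$; this eliminates $7D \sdp 3B$ and forces $z \in 7C$. If you wish to keep your route you must either supply an analogous discriminating invariant for $C_E(z)$ itself or actually carry out and report the GAP verification you defer to; as written, the key separation is missing.
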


\begin{proof} Let $H$ be as in the hypothesis and let $(x,y,z)$ be a Hurwitz generating triple for $H$. By \cite[Proposition 4]{mallesmallhurwitz}, $(x,y,z)$ is of $\DD$-type $(2B,3B,7D)$. The fusion of $\DD$ into $\F$ can easily be determined (up to outer automorphism of $\F$) and from the fusion of $\F$ into $\E$ we see that $(x,y,z)$ is of $\E$-type either $(2C,3C,7C)$ or $(2C,3C,7D)$.

Let $P \cong 7 \sdp 3 \sg \DD$. From \cite[pg. 89]{atlas} we see that $P$ does not contain elements from $\DD$-class $7ABC$, since such elements are normalised only by elements of order $2$. If $P$ is contained in a maximal subgroup of $\DD$ isomorphic to $(7 \times L_2(8)) \sdp 2$, then elements of order $3$ in $P$ fuse to $\DD$-class $3A$, since $C_D(3B)$ is not divisible by $7$. Since all elements of order $3$ in $N_D(7D)$ are $\DD$-conjugate, it follows that $P \cong 7D \sdp 3A$ is unique up to conjugacy in $\DD$.

Since $N_D(3A) \cong S_3 \times L_2(8)$, these elements fuse to $\F$-class $3A$ or $3B$ and hence $\E$-class $3A$ or $3B$. Then, by Lemma \ref{parafusion}, subgroups isomorphic to $P$ are of $\E$-type $7C \sdp 3A$ or $7D \sdp 3B$. Since $P$ is contained in a conjugate of $C_D(7ABC) \cong 7 \times L_3(2)$, if $C_E(H) \cong 7$, then $N_E(P)$ contains $7 \times (7 \sdp 3) \times L_3(2)$ and hence the $\DD$-class $7D$ fuses to the $\E$-class $7C$. This completes the proof.\end{proof}

\section{The Hurwitz subgroups of $\E$}\label{sc}
We now account for all of the conjugacy classes of Hurwitz subgroups of $\E$ and prove that they are as stated in Table \ref{e62table}. The proof is organised into lemmas according to the type of each Hurwitz subgroup.

\subsection*{$(2A,3A,7C)$-subgroups}
\begin{lemma} There is a unique conjugacy class of $(2A,3A,7C)$-subgroups in $\E$. If $H$ belongs to this class, then $H \cong L_3(2)$ and $N_E(H) \cong (L_3(2) \times L_3(2) \times L_3(2)) \sdp 2$.\end{lemma}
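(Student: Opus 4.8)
The plan is to extract everything from the single normalised structure constant $n_E(2A,3A,7C) = 1/28224$ together with the internal structure constant of $L_3(2)$. First I would compute $n_{L_3(2)}(2A,3A,7C)$. In $L_3(2)$ there is one class of involutions, one class of elements of order $3$, and two classes $7A$, $7B$ of elements of order $7$; applying the Frobenius formula to the character table of $L_3(2)$ gives $n_{L_3(2)}(2A,3A,7A) = n_{L_3(2)}(2A,3A,7B) = 1$ (only the trivial character contributes, and $\tfrac{21\cdot 56\cdot 24}{168}=168=|L_3(2)|$). Since an $L_3(2)$ lying inside $T''$ and projecting non-trivially onto a single factor has all its elements of order $7$ in $\E$-class $7C$, by the fusion rules preceding Lemma \ref{tsubgroups}, both $7$-classes of $L_3(2)$ fuse to $7C$, so $n_{L_3(2)}(2A,3A,7C) = 2$.

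Next I would exhibit an explicit $(2A,3A,7C)$-subgroup. Writing $T'' = K_1\times K_2\times K_3$ with each $K_i\cong L_3(2)$, a single factor $K_1$ is isomorphic to $L_3(2)$ and projects onto exactly one factor of $T''$, so by Lemma \ref{tsubgroups} a Hurwitz triple of $K_1$ is of $\E$-type $(2A,3A,7C)$; the computation above shows such triples exist. Thus $K_1$ is a $(2A,3A,7C)$-subgroup. Inside $T$ one reads off $N_T(K_1) = T''\sdp\langle\sigma\rangle \cong (L_3(2)\times L_3(2)\times L_3(2))\sdp 2$, where $\sigma$ is an involution of $T\setminus T''$ fixing the first factor, acting on $K_1$ as an outer automorphism and interchanging $K_2$ and $K_3$.

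The crux, and the step I expect to be the main obstacle, is to promote this to the full normaliser $N_E(K_1)$, i.e.\ to show that no element of $\E$ outside $T$ normalises $K_1$. This reduces to proving $C_E(K_1) = K_2\times K_3$: granting this equality, any $g\in N_E(K_1)$ normalises $K_1\times C_E(K_1) = T''$ and hence lies in $N_E(T'')=T$, so $N_E(K_1)=N_T(K_1)$. The inclusion $C_E(K_1)\supseteq K_2\times K_3$ is clear, and one has the bound $C_E(K_1)\sg C_E(y_0)\cong L_2(2)\times\LL$ for a $3A$-element $y_0\in K_1$; establishing that the reverse inclusion is an equality is precisely the kind of normaliser computation the introduction flags as relying on \cite{maxe62} and, where necessary, on GAP \cite{gap}. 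Assuming it, $N_E(K_1)\cong (L_3(2)\times L_3(2)\times L_3(2))\sdp 2$ and $[N_E(K_1):K_1] = 168^2\cdot 2$.

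Finally I would feed these data into the counting formula
\[n_E(2A,3A,7C) = \sum_{i\,\in\, I}\frac{n_{H_i}(2A,3A,7C)}{[N_E(H_i):H_i]}.\]
The class of $K_1$ alone contributes $\dfrac{2}{168^2\cdot 2} = \dfrac{1}{28224} = n_E(2A,3A,7C)$, so it already accounts for the entire structure constant. Since every term of the sum is non-negative, there can be no further classes: there is a unique $\E$-class of $(2A,3A,7C)$-subgroups, every such subgroup is $\E$-conjugate to $K_1\cong L_3(2)$, and its normaliser is $(L_3(2)\times L_3(2)\times L_3(2))\sdp 2$, as claimed. In particular the isomorphism type $L_3(2)$ and the impossibility of a larger Hurwitz group of this type both fall out of the saturation of the structure constant, so no separate argument bounding $|H|$ is required.
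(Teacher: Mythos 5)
Your strategy is essentially the paper's: exhibit a factor $K_1$ of $T''$ as a $(2A,3A,7C)$-subgroup, compute $N_T(K_1)=T''\sdp 2$, and saturate the structure constant via $n_{K_1}(2A,3A,7C)/[N_E(K_1):K_1]=2/(2\cdot 168^2)=1/28224$. The computation $n_{L_3(2)}(2A,3A,7C)=2$, the fusion of both $L_3(2)$-classes of $7$-elements to $7C$ (which is rational, since $N_E(7C)$ induces the full group of order $6$ on a Sylow $7$-subgroup), and the deduction of uniqueness from non-negativity of the remaining terms are all correct.

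There is, however, one genuine gap, and it sits exactly where you flag it: you \emph{assume} $C_E(K_1)=K_2\times K_3$ rather than prove it. The counting cannot absorb this omission: a priori you only have $N_E(K_1)\supseteq T''\sdp 2$, which gives a \emph{lower} bound on $[N_E(K_1):K_1]$ and hence only the upper bound $\le 1/28224$ on this class's contribution --- consistent with the total structure constant but not forcing equality, so neither the normaliser nor uniqueness follows. What you need is the reverse inclusion $N_E(K_1)\sg T$, and it follows in one line from the local data in Section \ref{aux}, with no appeal to GAP. Take $z\in K_1$ of order $7$; then $z\in 7C$ and $C_E(z)\cong 7\times L_3(2)\times L_3(2)$, where the two $L_3(2)$ factors must be $K_2\times K_3$ (a subgroup of order $168^2$ meeting $\langle z\rangle$ trivially projects isomorphically onto the perfect direct factor $L_3(2)^2$, so coincides with it) and the factor of order $7$ is $\langle z\rangle$ itself. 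If $c=z^ak\in C_E(K_1)$ with $k\in K_2\times K_3$, then $z^a=ck^{-1}$ centralises $K_1$, forcing $a=0$ since $Z(K_1)=1$; hence $C_E(K_1)=K_2\times K_3$, so $N_E(K_1)$ normalises $K_1\times C_E(K_1)=T''$ and lies in $N_E(T'')=T$, giving $N_E(K_1)=N_T(K_1)\cong(L_3(2)\times L_3(2)\times L_3(2))\sdp 2$. With this inserted your argument is complete and matches the paper's proof, which leaves the same step as ``clear''.
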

\begin{proof} From the examination of the normalisers of $p$-elements \cite[Table 2]{maxe62}, we see that a factor of $T''$ must be of type $(2A,3A,7C)$. From the discussion of the action of elements in $T \setminus T''$, it is clear that subgroups projecting onto one factor are conjugate in $\E$ and that $N_E(H) \cong (L_3(2) \times L_3(2) \times L_3(2)) \sdp 2$. This accounts for the entire $n_E(2A,3A,7C)$ structure constant.\end{proof}

%
\subsection*{$(2B,3A,7C)$-subgroups}
\begin{lemma} There is a unique conjugacy class of $(2B,3A,7C)$-subgroups in $\E$. If $H$ belongs to this class, then $H \cong L_3(2)$ and $N_E(H) \cong L_3(2) \times S_4 \times S_4$.\end{lemma}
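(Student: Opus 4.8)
The plan is to account for the whole normalised structure constant $n_E(2B,3A,7C) = 1/288$ by a single conjugacy class of subgroups isomorphic to $L_3(2)$, using the counting formula of Section \ref{method}. Unlike the $(2A,3A,7C)$ case, such a subgroup cannot lie in a conjugate of $T$, since $(2B,3A,7C)$ does not appear among the five types of Lemma \ref{tsubgroups}; a different containment is therefore needed. Let $(x,y,z)$ be a Hurwitz triple of this type and put $H = \langle y,z \rangle$, so that $H$ is a Hurwitz group containing $z \in 7C$ and $y \in 3A$. Its centraliser is
\[ C_E(H) = C_E(y) \cap C_E(z), \]
and from the normalisers of $p$-elements in \cite[Table 2]{maxe62} I read off $C_E(z) \cong 7 \times L_3(2) \times L_3(2)$ and $C_E(y) \cong 3 \times \LL$.

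I expect the determination of this intersection to be the main obstacle. The two $L_3(2)$ direct factors of $C_E(z)$ appear inside $N_E(7C) \cong (7 \sdp 3 \times L_3(2) \times L_3(2)) \sdp 2$ (equivalently $N_E(7C \sdp 3A)$, by part (1)(a) of Lemma \ref{parafusion}), and the claim is that $C_E(H)$ meets each of them in a maximal subgroup $S_4$, so that $C_E(H) \cong S_4 \times S_4$. Pinning down $(3 \times \LL) \cap (7 \times L_3(2) \times L_3(2))$ precisely as $S_4 \times S_4$, rather than merely bounding its order, does not follow formally from the cited results, and is exactly the kind of step flagged in the introduction as requiring GAP \cite{gap}. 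Granting $C_E(H) \cong S_4 \times S_4$, since $[H, C_E(H)] = 1$ and $H \cap C_E(H) = 1$ it remains only to check that the outer automorphism of $H$ is not induced in $\E$; this too I would verify computationally, after which $N_E(H) = H \times C_E(H) \cong L_3(2) \times S_4 \times S_4$ and $H \cong L_3(2)$.

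Finally I would close with the counting formula. Within $H \cong L_3(2)$ the involutions form a single class fusing to $2B$, the elements of order $3$ fuse to $3A$, and the two classes of elements of order $7$ both fuse to $7C$; a short computation with the character table of $L_3(2)$ gives $n_H(2A,3A,7A) = n_H(2A,3A,7B) = 1$, whence $n_H(2B,3A,7C) = 2$. Since $[N_E(H):H] = |S_4 \times S_4| = 576$, this class contributes $2/576 = 1/288 = n_E(2B,3A,7C)$, accounting for the entire structure constant. As every $(2B,3A,7C)$-subgroup contributes a strictly positive term to the sum in the formula of Section \ref{method}, there can be no further class, which simultaneously forces $H \cong L_3(2)$, the uniqueness of the conjugacy class, and the stated normaliser.
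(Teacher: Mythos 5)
Your closing paragraph is the right engine and matches the paper's: once one class with $H \cong L_3(2)$ and $N_E(H) \cong L_3(2) \times S_4 \times S_4$ is in hand, $n_H(2B,3A,7C)=2$ and $[N_E(H):H]=576$ give a contribution of $1/288$, exhausting $n_E(2B,3A,7C)$ and killing any further classes. The gap is in how you produce that one class. You start from an abstract triple $(x,y,z)$ and write $C_E(H) = C_E(y)\cap C_E(z)$ with $C_E(y) \cong 3\times\LL$ and $C_E(z)\cong 7\times L_3(2)\times L_3(2)$ (both correct), but this intersection is \emph{not} determined by the classes $3A$ and $7C$ alone: it depends on the relative position of $y$ and $z$, i.e.\ on the conjugacy class of the pair. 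For instance, for a $(2A,3A,7C)$-pair the same two centralisers meet in $L_3(2)\times L_3(2)$. So ``check in GAP that the intersection is $S_4\times S_4$'' is not a well-posed computation until the pairs (equivalently the subgroups $\langle y,z\rangle$) have been classified up to conjugacy -- which is the content of the lemma; as stated the step is circular, not merely computational. You also never establish $H\cong L_3(2)$ for your abstract triple, yet the value $n_H(2B,3A,7C)=2$ and the index $576$ both presuppose it; a priori $\langle y,z\rangle$ could be any Hurwitz group.

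The paper sidesteps all of this by starting from a \emph{concrete} witness rather than an abstract triple: Table \ref{l62table} supplies a class of $(2B,3B,7CD)$-subgroups $H\cong L_3(2)$ of $\LL$ with $N_L(H)\cong H\times S_4$, fusing to $\E$-type $(2B,3A,7C)$. Taking $y$ of order $3$ in $S=C_L(H)$, which lies in $\E$-class $3A$, one gets a second copy of $\LL$, namely $C_E(y)'$, containing $H\times S_4$ and meeting $S$ trivially, whence $N_E(H)\geq H\times S_4\times S_4$; the reverse inequality comes from the self-normalisation of $P\times S_4\times S_4$ with $P\cong 7C\sdp 3A$. This is a purely structural argument with no GAP step, and only then is the counting formula invoked. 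To repair your proof you would need to replace the centraliser-intersection computation with some such explicit construction (or at minimum fix a concrete conjugacy class of pairs before computing).
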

\begin{proof}
From Table \ref{l62table} we see that $\E$ contains a class of $(2B,3A,7C)$-subgroups with representative $H \cong L_3(2)$ and $N_L(H) \cong H \times S_4$. Let $S$ denote $C_L(H) \cong S_4$, let $y \in S$ have order $3$ and note that $y$ belongs to $\E$-class $3A$ since it belongs to $\LL$-class $3B$. Then, $C_E(y)' \cong \LL$ contains $H \times S_4$. Since $C_E(y)' \cap S$ is trivial, it follows that $N_E(H)$ contains $C \cong H \times S_4 \times S_4$. Let $P \cong 7C \sdp 3A$. Since the group $P \times S_4 \times S_4$ is self-normalising in $\E$ it follows that $N_E(H) = C$. Since this accounts for the full structure constant, the proof is complete.\end{proof}

%
\subsection*{$(2B,3B,7AB)$-subgroups}
\begin{lemma} There is a unique conjugacy class of $(2B,3B,7AB)$-subgroups in $\E$. If $H$ belongs to this class, then $H \cong L_3(2)$ and $N_E(H) \cong L_3(2) \times G_2(2)$.\end{lemma}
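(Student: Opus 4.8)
The plan is to mirror the proof of the preceding lemmas: produce one subgroup of the required type, determine its normaliser through the centraliser of an element of order $7$, and then read off uniqueness from the structure-constant formula of Section \ref{method}. Since $n_E(2B,3B,7AB)=1/6048\ne 0$ such a subgroup $H\cong L_3(2)$ exists, and by Lemma \ref{tsubgroups} one may take $H$ inside $T''$ projecting non-trivially onto two of its factors. Fix a Hurwitz triple $(x,y,z)$ generating $H$, with $z$ of order $7$ lying in $7AB$. From the reproduced data $N_E(7AB)\cong(7\times\DD)\sdp 3$ we read off $C_E(z)\cong 7\times\DD$, the cyclic factor being $\langle z\rangle$ and $\DD\cong{}^3D_4(2)$ the derived part commuting with $z$.

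Inside $H$ the normaliser $P=N_H(\langle z\rangle)\cong 7\sdp 3$ is a maximal subgroup, and its element of order $3$ acts non-trivially on $\langle z\rangle$; modulo $C_E(z)$ it therefore coincides with the outer $\sdp 3$ in $N_E(\langle z\rangle)$, which induces the order-$3$ field automorphism of $\DD$. Computing inside the direct factorisation $C_E(z)=\langle z\rangle\times\DD$, and using that an element centralising $P$ must centralise both direct factors separately, gives $C_E(P)\cong C_\DD(P)\cong G_2(2)$; here I invoke the standard property of ${}^3D_4(2)$ that the fixed-point subgroup of its outer automorphism of order $3$ is $G_2(2)$, while the $\langle z\rangle$-part contributes nothing since the order-$3$ element of $P$ acts freely on it. The crucial, and hardest, step is to upgrade this to $C_E(H)\cong G_2(2)$: the inclusion $C_E(H)\sg C_E(P)\cong G_2(2)$ is immediate, but the reverse inclusion — namely that the whole of this $G_2(2)$ centralises $H$ and not merely $P$ (equivalently that $\E$ contains the commuting product $L_3(2)\times G_2(2)$ with the $L_3(2)$ of type $(2B,3B,7AB)$), together with the verification that $N_E(H)$ induces no outer automorphism on $H$ — is where I would resort to a direct calculation in GAP \cite{gap}. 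Granting this, $N_E(H)=H\times C_E(H)\cong L_3(2)\times G_2(2)$.

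It remains to see that this single class exhausts the structure constant. For $H\cong L_3(2)$ the two classes of elements of order $7$ both fuse to $7AB$, so $n_H(2B,3B,7AB)=n_{L_3(2)}(2A,3A,7A)+n_{L_3(2)}(2A,3A,7B)=1+1=2$. Since $[N_E(H):H]=|G_2(2)|=12096$, the contribution of this class to the counting formula is $2/12096=1/6048$, which is precisely $n_E(2B,3B,7AB)$. Hence no further classes can occur, there is a unique conjugacy class of $(2B,3B,7AB)$-subgroups, and its normaliser is as claimed.
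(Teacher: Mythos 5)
Your overall framework (exhibit the class, compute its normaliser, check that $n_H(2B,3B,7AB)/[N_E(H):H]=2/12096=1/6048$ exhausts the structure constant) is exactly the paper's, and that closing arithmetic is correct. But where you attempt to derive the normaliser from $N_E(7AB)\cong(7\times\DD)\sdp 3$, the paper's proof is a one-line citation of \cite[Proposition 5.2]{maxe62}, which constructs the commuting product $L_3(2)\times G_2(2)$ in $\E$ directly and identifies it as the full normaliser. Your reconstruction has two genuine gaps beyond the step you already flag as requiring GAP. First, existence of an $L_3(2)$ of type $(2B,3B,7AB)$ does not follow from $n_E(2B,3B,7AB)\neq 0$ (the triple could a priori generate something else), nor from Lemma \ref{tsubgroups}, which is conditional on already having $H\cong L_3(2)\sg T$ and only restricts the possible types; you would need to actually exhibit a diagonal $L_3(2)$ across two factors of $T''$ and check that its elements of order $7$ fuse to $7AB$ rather than $7D$ (these two cases are distinguished by whether the two projections differ by an inner or an outer automorphism).

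Second, your claim $C_E(P)\cong C_{\DD}(P)\cong G_2(2)$ silently assumes that the order-$3$ element of $P$ induces on $\DD$ an outer automorphism conjugate to the standard field automorphism, i.e.\ lies in the class of $\DD\ns 3\setminus\DD$ whose centraliser in $\DD$ is $G_2(2)$. An arbitrary element of order $3$ in that outer coset need not have this property, so the identification of its class must be justified. Granting that, you still only get $C_E(H)\sg C_E(P)$, and the reverse containment plus the absence of induced outer automorphisms of $H$ is precisely the content of \cite[Proposition 5.2]{maxe62}; deferring it to a GAP computation is a legitimate alternative, but as written the proof is incomplete at exactly the point where all the work lies.
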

\begin{proof} The existence and structure follow from \cite[Proposition 5.2]{maxe62} and we see immediately that they account for the entire structure constant.\end{proof}

%
\subsection*{$(2B,3B,7D)$-subgroups}
\begin{lemma} There are two conjugacy classes of $(2B,3B,7D)$-subgroups in $\E$ with representatives $H$ as follows:
\begin{enumerate}
\item $H \cong L_3(2)$ and $N_E(H) \cong (L_3(2) \times L_3(2)) \sdp 2$; or,
\item $H \cong L_3(2)$ and $N_E(H) \cong L_3(2) \times S_4$.
\end{enumerate}
\end{lemma}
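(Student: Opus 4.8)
The plan is to follow the template of the preceding lemmas: exhibit the conjugacy classes inside subgroups whose structure is already under control, determine their normalisers in $\E$, and then confirm completeness by checking that the corresponding normalised structure constants exhaust the tabulated value $n_E(2B,3B,7D)=5/56$.

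First I would isolate the candidates. By Lemma~\ref{tsubgroups}, any $(2B,3B,7D)$-subgroup contained in a conjugate of $T$ projects nontrivially onto exactly two factors of $T''$, since its elements of order $3$ lie in $3B$; thus inside $T''\cong K_1\times K_2\times K_3$ it is a twisted-diagonal subgroup of some $K_i\times K_j$ with the third factor $K_3\cong L_3(2)$ centralising it. Because $\mathrm{Out}(L_3(2))\cong C_2$ and the outer automorphism interchanges the two $L_3(2)$-classes of elements of order $7$, the straight and outer-twisted diagonals are not $T''$-conjugate and their order-$7$ elements fall into different $\E$-classes. One of them maps into $7D$ and, being stabilised by the factor-swapping involution $\tau\in T\setminus T''$ (which acts on it and on $K_3$ simultaneously as an outer automorphism), yields $N_T(H)=(H\times K_3)\sdp 2$; the other I would expect to map into $7AB$, recovering the class of the previous lemma whose centraliser enlarges to $G_2(2)$. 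This produces the first class. The second class cannot lie in any conjugate of $T$, since its centraliser will turn out to be an $S_4$, which contains no $L_3(2)$; I would instead locate it by reading the $(2B,3B,7D)$-subgroups of $\LL$ off Table~\ref{l62table}, exactly as in the $(2B,3A,7C)$ argument, obtaining $H\cong L_3(2)$ with $N_L(H)\cong H\times S_4$.

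Next I would compute the two $\E$-normalisers. For the diagonal, the only part of the top $S_3$ stabilising $H$ is $\langle\tau\rangle$, so $N_T(H)=(L_3(2)\times L_3(2))\sdp 2$ with $C_E(H)=K_3\cong L_3(2)$; the delicate point, which I expect to verify in GAP as the paper does elsewhere, is that this centraliser does \emph{not} enlarge (in contrast to the $7AB$ case) and that $N_E(H)=N_T(H)$, giving class~(1) with $[N_E(H):H]=336$. For the class coming from $\LL$ I would build the centraliser upward: the element of order $3$ lies in $3B$, and using the structure of $N_E(3B)\cong(3\times O_8^-(2))\sdp 2$ together with the local data of Section~\ref{aux} I would show that $C_E(H)$ is precisely the $S_4$ already visible in $\LL$, and that no outer automorphism of $H$ is induced, so that $N_E(H)\cong L_3(2)\times S_4$ with $[N_E(H):H]=24$. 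Establishing that this second normaliser admits no further normalising or centralising elements of $\E$ is the genuine obstacle, and is exactly the kind of computation best confirmed in GAP.

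Finally I would close the count. In $L_3(2)\cong L_2(7)$ the Frobenius formula gives $n_{L_3(2)}(2A,3A,7A)=1$ for each of the two classes of elements of order $7$, and since every $(2,3,7)$-triple in $L_3(2)$ generates the whole group, $n_{H_i}(2B,3B,7D)=2$ for both representatives. Substituting into the formula of Section~\ref{method},
\[n_E(2B,3B,7D)=\frac{2}{336}+\frac{2}{24}=\frac{1}{168}+\frac{1}{12}=\frac{5}{56},\]
which is precisely the tabulated structure constant. As the two classes already saturate $n_E(2B,3B,7D)$, there can be no further classes, and the lemma follows.
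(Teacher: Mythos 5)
Your overall strategy --- exhibit the classes, determine their normalisers, and close the count against $n_E(2B,3B,7D)=5/56$ --- is the same as the paper's, and your final arithmetic $2/336+2/24=1/168+1/12=5/56$ is correct (including the point that both $H$-classes of elements of order $7$ fuse to $7D$, so $n_H=2$). But the paper's actual proof is essentially a citation: both the existence of the two classes and the exact structure of their normalisers are taken from \cite[Proposition 8.4]{maxe62}, after which only the saturation check remains. Your attempt to reconstruct that input independently contains a concrete error in the second class. You propose to read it off Table \ref{l62table} as a subgroup of $\LL$ with $N_L(H)\cong H\times S_4$, ``exactly as in the $(2B,3A,7C)$ argument''. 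The row of Table \ref{l62table} that fuses to $\E$-type $(2B,3B,7D)$ is the $\LL$-type $(2B,3C,7E)$, and its two classes have $N_L(H)\cong L_3(2)$ and $L_3(2)\sdp 2$; the normaliser $H\times S_4$ in $\LL$ occurs only in the $(2B,3B,7CD)$ row, which fuses to the different $\E$-type $(2B,3A,7C)$. The $S_4$ centralising the class you want is not visible inside $\LL$ at all; it is visible inside $\F$ (Table \ref{f42table}, $\F$-type $(2C,3B,7A)$, with $N_F(H)\cong L_3(2)\times S_4$), which is why Table \ref{e62table} lists $\F$ rather than $\LL$ as the relevant overgroup. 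Your subsequent plan to show that ``$C_E(H)$ is precisely the $S_4$ already visible in $\LL$'' therefore starts from a false premise, although it can be repaired by working in $\F$ instead (indeed both classes and their full $\E$-normalisers appear in the $(2B,3B,7A)$ and $(2C,3B,7A)$ rows of Table \ref{f42table}).

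Two further load-bearing steps are only asserted: that exactly one of the two diagonal types in $K_i\times K_j\sg T''$ has its elements of order $7$ in $7D$ rather than $7AB$ (you say you ``would expect'' this), and that neither centraliser grows in $\E$ (deferred to GAP as ``the genuine obstacle''). The latter cannot be waved through by the saturation check itself: exhibiting a subgroup $X_i\sg N_E(H_i)$ only gives an upper bound on the contribution $n_{H_i}/[N_E(H_i):H_i]$, whereas concluding from $\sum_i \hat c_i = 5/56$ that no further classes exist requires the normalisers to be bounded above (equivalently, the contributions bounded below). So those deferrals, or else the citation of \cite[Proposition 8.4]{maxe62} that the paper actually uses, are indispensable to the completeness claim.
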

\begin{proof} The existence and structure follows from \cite[Proposition 8.4]{maxe62}. It remains to observe that these classes account for the entire structure constant.\end{proof}

%
\subsection*{$(2C,3A,7C)$-subgroups}
\begin{lemma} \label{cac} There are two conjugacy classes of $(2C,3A,7C)$-subgroups in $\E$ with representatives $H$ as follows:
\begin{enumerate}
\item $H \cong L_3(2)$ and $N_E(H) \cong (L_3(2) \times D_8 \times D_8) \sdp 2$ or
\item $H \cong 2^3 \ns L_3(2)$ and $N_E(H) \cong 2^3 \ns L_3(2) \times (2^2 \times 2^2) \sdp S_3$.
\end{enumerate}
\end{lemma}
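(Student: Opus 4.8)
The plan is to produce representatives of two conjugacy classes of $(2C,3A,7C)$-subgroups, to compute their normalisers in $\E$, and then to use the structure constant formula of Section \ref{method} to confirm that these two classes exhaust the value $n_E(2C,3A,7C)=11/192$. Since this constant equals the sum of the contributions $n_{H_i}(2C,3A,7C)/[N_E(H_i):H_i]$ over all classes, once two classes are found whose contributions already sum to $11/192$ there can be no further classes; thus exhibiting the two representatives and matching the constant simultaneously proves existence and completeness.

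The first thing I would record is a uniform constraint on the centraliser. Any $(2C,3A,7C)$-subgroup $H$, of either isomorphism type, contains $P = N_H(Q) \cong 7 \sdp 3$, where $Q$ is a Sylow $7$-subgroup of $H$; one checks in both cases that $P$ is of $\E$-type $7C \sdp 3A$. By Lemma \ref{parafusion}(1)(a) we have $N_E(P) \cong (7 \sdp 3 \times L_3(2) \times L_3(2)) \sdp 2$, and since $7 \sdp 3$ has trivial centre it follows that $C_E(P) \cong L_3(2) \times L_3(2)$. As $P \sg H$ this forces $C_E(H) \sg L_3(2) \times L_3(2)$, which severely restricts the possible shapes of $N_E(H)$. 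Note also that by Lemma \ref{tsubgroups} no $(2C,3A,7C)$-subgroup isomorphic to $L_3(2)$ lies in a conjugate of $T$, so these subgroups must be located outside $T$, using Table \ref{l62table}, the fusion in Table \ref{fusion}, and direct computation.

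For the first class I would exhibit $H \cong L_3(2)$ and, identifying $C_E(H)$ inside $C_E(P) \cong L_3(2) \times L_3(2)$ (a Sylow $2$-subgroup of each factor) and verifying the precise extension in GAP, obtain $C_E(H) \cong D_8 \times D_8$ and $N_E(H) \cong (L_3(2) \times D_8 \times D_8) \sdp 2$. Here both $L_3(2)$-classes of elements of order $7$ fuse to $7C$, so the relevant structure constant is $n_H(2C,3A,7C)=2\,n_{L_3(2)}(2A,3A,7A)=2$, while $[N_E(H):H]=8\cdot 8\cdot 2=128$; hence this class contributes $2/128=3/192$. For the second class the subgroup is the larger $H \cong 2^3 \ns L_3(2)$, whose $L_3(2)$-quotient carries the Hurwitz triple and whose elements of orders $2$, $3$ and $7$ again fuse to $2C$, $3A$ and $7C$. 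Because the extension is non-split, every Hurwitz triple of this type generates all of $H$ rather than a complement $L_3(2)$, so this class is genuinely distinct from the first and is counted separately. Determining $C_E(H)$ inside $C_E(P) \cong L_3(2) \times L_3(2)$ and checking the structure in GAP yields $C_E(H) \cong (2^2 \times 2^2) \sdp S_3$ with $N_E(H) \cong H \times C_E(H)$; computing $n_H(2C,3A,7C)=4$ and $[N_E(H):H]=\vert(2^2 \times 2^2)\sdp S_3\vert=96$ gives the contribution $4/96=8/192$.

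Finally I would note that $3/192+8/192=11/192=n_E(2C,3A,7C)$, so the two classes account for the entire constant and there are exactly two. The main obstacle is not the structure constant bookkeeping, which is routine once the normalisers are known, but rather pinning down the exact isomorphism types of those normalisers: recognising the second subgroup as a genuine non-split $2^3 \ns L_3(2)$ (so that its contribution is counted separately from the $L_3(2)$ class) and verifying the two centraliser shapes $D_8 \times D_8$ and $(2^2 \times 2^2)\sdp S_3$ sitting inside $C_E(P) \cong L_3(2)\times L_3(2)$. This is precisely the point at which the abstract centraliser analysis must be supplemented by explicit computation in GAP, after which the identity $3/192+8/192=11/192$ provides the clean confirmation that no class has been missed.
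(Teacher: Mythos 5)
Your proof is correct and reaches the paper's conclusion, but it distributes the logical weight differently. The paper also fixes $P \cong 7C \sdp 3A$ and works in GAP, but its computation is exhaustive from the outset: it enumerates all $1113210$ $2C$-involutions inverting the $3A$-element of $P$, collects the $441$ resulting subgroups isomorphic to $L_3(2)$ and the $588$ isomorphic to $2^3 \ns L_3(2)$, and shows each family forms a single orbit under $N_E(P)$; since every $(2C,3A,7C)$-subgroup contains a conjugate of $P$, this classifies the conjugacy classes directly, and the structure-constant identity serves only as a final confirmation. You instead exhibit one representative of each isomorphism type via the appendix tables, bound $C_E(H)$ inside $C_E(P) \cong L_3(2) \times L_3(2)$ using Lemma \ref{parafusion}, compute the two normalisers in GAP, and let the arithmetic $2/128 + 4/96 = 1/64 + 1/24 = 11/192 = n_E(2C,3A,7C)$ carry the completeness argument: since every conjugacy class contributes a positive amount, no third class can exist. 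Both routes ultimately lean on GAP for the normaliser structures; yours requires less machine enumeration but places more weight on the correctness of the values $n_H(2C,3A,7C) = 2$ and $4$ --- in particular your observation that every $(2,3,7)$-triple in $2^3 \ns L_3(2)$ generates the whole group (because the natural module is irreducible and the extension is non-split) is exactly what is needed for the counting formula of Section \ref{method} to apply --- while the paper's enumeration buys explicit conjugacy information that is independent of the character-theoretic count.
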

\begin{proof}
For the proof we use GAP to explicitly construct conjugacy class representatives and their normalisers in $\E$. We take as generators for $\E$ those given in \cite[Section 3.3]{howlett} since this allows us to easily construct the parabolic and maximal rank subgroups of $\E$. From Table \ref{f42table} we see that there exists $(2C,3A,7C)$-subgroups of $\E$ isomorphic to $L_3(2)$ and $2^3 \ns L_3(2)$. Let $P \cong 7C \sdp 3A$ denote the normaliser in $H$ of a Sylow $7$-subgroup and note that $P$ does not depend on the isomorphism type of $H$. We then construct the normaliser in $\E$ of a $3A$-element in $P$ and check which of the $1113210$ $2C$-involutions inverting the $3A$ element generate with $P$ a subgroup isomorphic to $H$. Of these, there are $441$ distinct subgroups isomorphic to $L_3(2)$ and $588$ distinct subgroups isomorphic to $2^3 \ns L_3(2)$. We then see that under the action of $N_E(P)$ isomorphic groups belong to the same orbit, hence $H$ is determined up to conjugacy by its isomorphism type. We are then able to construct the full normaliser in $\E$ of each isomorphism type and they are as stated. Finally, since these conjugacy classes account for the entire structure constant, the proof is complete.\end{proof}

%
\subsection*{$(2C,3B,7D)$-subgroups}
\begin{lemma} There are four conjugacy classes of $(2C,3B,7D)$-subgroups in $\E$ with representatives $H$ as follows:
\begin{enumerate}
\item $H \cong L_3(2)$ and $N_E(H) \cong (L_3(2) \times D_8) \sdp 2$
\item $H \cong 2^3 \ns L_3(2)$ and $N_E(H) \cong 2^3 \ns L_3(2) \times S_4$
\item $H \cong 2^3 \ns L_3(2)$ and $N_E(H) \cong 2^3 \ns L_3(2) \times D_8$ or
\item $H \cong 2^3 \ns L_3(2)$ and $N_E(H) \cong 2^3 \ns L_3(2) \times 2^2$.
\end{enumerate}
\end{lemma}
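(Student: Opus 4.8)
The plan is to treat this as the exact computational analogue of Lemma \ref{cac}, since the four classes cannot be read off directly from \cite{maxe62} and must be constructed explicitly in GAP. From Table \ref{f42table} I would first record that $\F$, and hence $\E$, contains $(2C,3B,7D)$-subgroups of precisely the two isomorphism types $L_3(2)$ and $2^3 \ns L_3(2)$; the content of the lemma is that the latter splits into three $\E$-classes while the former gives a single class. As before, I would fix a common $7$-normaliser $P \cong 7D \sdp 3B$, which by Lemma \ref{parafusion} satisfies $N_E(P) \cong (7 \sdp 3 \times L_3(2)) \sdp 2$, and write $P = \langle y, z \rangle$ with $y$ a $3B$-element and $z$ a $7D$-element. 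Every $(2C,3B,7D)$-subgroup contains a conjugate of $P$, so after replacing $H$ by a conjugate we may assume $P \sg H$; the standard Sylow argument then shows that two such subgroups containing $P$ are $\E$-conjugate if and only if they are conjugate under $N_E(P)$, so that the $\E$-classes are in bijection with the $N_E(P)$-orbits on the set of these subgroups containing $P$.

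The enumeration itself would proceed as in Lemma \ref{cac}: construct $N_E(3B) \cong (3 \times O_8^-(2)) \sdp 2$, run through the $2C$-involutions $x$ inverting $y$, and for each form $H = \langle P, x \rangle$, recording its isomorphism type. This produces some number of distinct subgroups of each type, which I would then partition into $N_E(P)$-orbits. The expectation, confirmed by the structure-constant bookkeeping below, is that the $L_3(2)$-subgroups form a single orbit while the $2^3 \ns L_3(2)$-subgroups fall into three orbits.

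The main obstacle, and the feature distinguishing this lemma from Lemma \ref{cac}, is that isomorphism type is no longer a complete conjugacy invariant: three of the four classes are isomorphic to $2^3 \ns L_3(2)$, so the orbit computation must genuinely separate them, and I must exhibit an invariant confirming that the three orbits really are distinct $\E$-classes. Since $2^3 \ns L_3(2)$ is centreless (the normal $2^3$ is the natural irreducible module for $L_3(2)$, hence fixed-point-free, while the quotient is simple), for these three classes the normaliser splits as $N_E(H) = H \times C_E(H)$, and the three are separated by their centralisers $C_E(H) \cong S_4$, $D_8$ and $2^2$ respectively. I would therefore compute $C_E(H)$, equivalently the full normaliser, in GAP for an orbit representative of each type, which simultaneously distinguishes the classes and establishes the stated normaliser structures; the $L_3(2)$-class is handled the same way and carries an additional outer $\sdp 2$, giving $N_E(H) \cong (L_3(2) \times D_8) \sdp 2$.

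Finally I would verify that these four classes exhaust the structure constant $n_E(2C,3B,7D) = 43/24$ via the formula of Section \ref{method}. The indices $[N_E(H):H]$ are $16, 24, 8, 4$ for the four classes, and the relevant internal structure constants are $n_{L_3(2)}(2C,3B,7D) = 2$ (a direct character-table computation in $L_3(2) \cong L_2(7)$: each of the two $7$-element classes contributes $1$ and both fuse to $7D$) and $n_{2^3 \ns L_3(2)}(2C,3B,7D) = 4$ (computed from the character table in GAP). The accounting then reads
\[
\frac{2}{16} + \frac{4}{24} + \frac{4}{8} + \frac{4}{4} = \frac{43}{24},
\]
which exhausts the structure constant. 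This last step is the decisive one: it both confirms that there are exactly four classes and rules out any Hurwitz subgroup of this type of a different isomorphism type.
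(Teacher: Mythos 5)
Your proposal is correct and follows essentially the same route as the paper: fix $P \cong 7D \sdp 3B$, enumerate in GAP the $2C$-involutions inverting the $3B$-element, partition the resulting subgroups into $N_E(P)$-orbits distinguished by their normalisers, and close the argument by checking that the contributions $\tfrac{2}{16}+\tfrac{4}{24}+\tfrac{4}{8}+\tfrac{4}{4}=\tfrac{43}{24}$ exhaust the structure constant. The only difference is cosmetic: the paper obtains the single $L_3(2)$ class and its normaliser $(L_3(2)\times D_8)\sdp 2$ directly from Kleidman--Wilson's Proposition~8.4 and runs the GAP computation only for the $2^3 \ns L_3(2)$ classes, whereas you treat all four classes uniformly by computation.
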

\begin{proof} By \cite[Proposition 8.4]{maxe62} there exists a unique conjugacy class of subgroups $H \cong L_3(2)$ in $\E$ and $N_E(H)$ is as given. From Table \ref{f42table} we see that there exist $(2C,3B,7D)$-subgroups isomorphic to $2^3 \ns L_3(2)$ and we now determine the conjugacy classes of such subgroups. Let $H \cong 2^3 \ns L_3(2)$ and let $P \cong 7D \sdp 3B$ denote the normaliser in $H$ of a Sylow $7$-subgroup. As in the proof of the previous lemma, we use GAP to construct conjugacy class representatives and their normalisers in $\E$ for each of the remaining classes. There are $128520$ $2C$-involutions inverting a $3B$-element in $P$. Gathering these together, there are $140$ distinct subgroups isomorphic to $2^3 \ns L_3(2)$ generated by $P$ and any such involution. Under the action of $N_E(P) \cong (P \times L_3(2)) \sdp 2$ these fall into three orbits of lengths $14$, $42$ and $84$ with normalisers in $\E$ as given. Since these four conjugacy classes account for the entire structure constant, the proof is complete.\end{proof}

%
\subsection*{$(2C,3C,7C)$-subgroups}
\begin{lemma} There are four conjugacy classes of $(2C,3C,7C)$-subgroups in $\E$ with representatives $H$ as follows:
\begin{enumerate}
\item $H \cong \DD$ and $N_E(H) \cong (7 \times \DD) \sdp 3$,
\item $H \cong L_3(2)$ and $N_E(H) \cong (3 \times L_3(2)) \sdp 2$,
\item $H \cong L_2(8)$ and $N_E(H) \cong S_3 \times (L_2(8) \sdp 3)$, or,
\item $H \cong L_2(8)$ and $N_E(H) \cong 2 \times L_2(8)$.
\end{enumerate}
\end{lemma}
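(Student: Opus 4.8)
The plan is to account for the full structure constant $n_E(2C,3C,7C) = 15/7$ by exhibiting, for each conjugacy class of $(2C,3C,7C)$-subgroups, its contribution to
\[n_E(2C,3C,7C) = \sum_{i \in I} \frac{n_{H_i}(2C,3C,7C)}{[N_E(H_i):H_i]},\]
and then checking that the contributions sum to exactly $15/7$; this last equality simultaneously proves that no further classes (in particular no copy of $2^3 \ns L_3(2)$) occur. By the classification of simple subgroups in \cite[Section 4]{maxe62}, the simple Hurwitz groups that can arise with this type are $L_3(2)$, $L_2(8)$ and $\DD$, so I would organise the argument according to these three isomorphism types.

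The $\DD$ class is immediate from Lemma \ref{3d4comm}: a subgroup $H \cong \DD$ with $N_E(H) \cong (7 \times \DD) \sdp 3$ has Hurwitz triples of $\E$-type $(2C,3C,7C)$, whence $[N_E(H):H] = 21$, and \cite[Proposition 4]{mallesmallhurwitz} together with the character table of $\DD$ gives the relevant normalised count $n_{\DD}(2C,3C,7C) = 3$. For the $L_3(2)$ class I would use Lemma \ref{tsubgroups}: a copy of $L_3(2)$ projecting non-trivially onto all three factors of $T''$ is of type $(2C,3C,7C)$ or $(2C,3C,7D)$, and for the straight diagonal copy, whose order-$7$ elements lie in $7C$, the element of order $3$ in $T \setminus T''$ cyclically permuting the factors centralises $H$ while a transposition induces a graph automorphism, yielding $N_E(H) \cong (3 \times L_3(2)) \sdp 2$ and $[N_E(H):H] = 6$. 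Because $N_E(7C)$ contains both an order-$3$ element acting on a Sylow $7$-subgroup and an inverting involution, the full automorphism group of a cyclic group of order $7$ is realised in $\E$, so the two classes of order-$7$ elements of $H$ both fuse to $7C$; a short computation with the character table of $L_3(2)$ then gives $n_{L_3(2)}(2C,3C,7C) = 2$.

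The two $L_2(8)$ classes are where the real work lies, and here I would argue computationally as in Lemma \ref{cac}. Since $L_2(8)$ has dihedral Sylow $7$-normaliser $7 \sdp 2$ and so contains no subgroup $7 \sdp 3$, the order-$3$ element of a Hurwitz triple does not normalise a Sylow $7$-subgroup; I would therefore fix a representative $z$ of $7C$ and, paralleling Lemma \ref{cac}, run in GAP over the $2C$-involutions $x$ such that $(x,y,z)$ is a Hurwitz triple of type $(2C,3C,7C)$ for the implied $y = (xz)^{-1}$, retaining those with $\langle x, z \rangle \cong L_2(8)$, and finally sort the resulting subgroups into $\E$-classes by computing their normalisers. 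I expect exactly two classes, with $N_E(H) \cong S_3 \times (L_2(8) \sdp 3)$ and $N_E(H) \cong 2 \times L_2(8)$, so that $[N_E(H):H]$ equals $18$ and $2$. For the internal constant, $L_2(8)$ has a single class of involutions and of order-$3$ elements, and its three classes of order-$7$ elements all fuse to $7C$ (again since all generators of a $7C$-cyclic subgroup are $\E$-conjugate); the character table of $L_2(8)$ gives $n_{L_2(8)}(2A,3A,7X) = 1$ for each $7$-class, hence $n_{L_2(8)}(2C,3C,7C) = 3$ in both cases.

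The proof concludes with the numerical check
\[\frac{3}{21} + \frac{2}{6} + \frac{3}{18} + \frac{3}{2} = \frac{1}{7} + \frac{1}{3} + \frac{1}{6} + \frac{3}{2} = \frac{15}{7} = n_E(2C,3C,7C),\]
confirming that the four classes listed are all of them. I expect the main obstacle to be the $L_2(8)$ analysis: separating the two $\E$-classes and determining their normalisers really does seem to need the explicit GAP computation, and throughout one must keep careful track of how the order-$7$ elements of each subgroup fuse into the $\E$-classes, since it is this fusion that fixes the internal structure constants. One should also verify that the triples attributed to $\DD$ genuinely generate $\DD$ rather than a proper subgroup, so that they are not double-counted against the $L_2(8)$ classes.
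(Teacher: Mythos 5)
Your proposal follows the same overall strategy as the paper -- identify the $\DD$ class via Lemma \ref{3d4comm}, pin down one class each of $L_3(2)$ and two of $L_2(8)$, compute exact normalisers (with GAP where necessary), and close by checking that the contributions sum to $n_E(2C,3C,7C)=15/7$ -- and all of your numerical data (internal constants $3$, $2$, $3$, $3$ and indices $21$, $6$, $18$, $2$) agrees with Table \ref{e62table}. The genuine divergence is in the $L_3(2)$ case: you realise $H$ as the straight diagonal of $T''$ and read off $N_T(H)\cong(3\times L_3(2))\sdp 2$, whereas the paper locates $H$ inside $\OO$ via Table \ref{o10table} and then argues that $N_E(H)=N_O(H)$ because $H$ is centralised by a $3B$-element. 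The diagonal picture is attractive, but as written it only yields the containment $N_T(H)\sg N_E(H)$, and this matters: the concluding count rules out further classes only if every listed contribution is \emph{exact}, and underestimating a normaliser overestimates its contribution, so a sum of overestimates reaching $15/7$ proves nothing. You need an additional step bounding $C_E(H)$ from above -- e.g.\ via $C_E(H)\sg C_E(7C\sdp 3C)$ using Lemma \ref{parafusion}, or via the $3B$-centraliser as the paper does -- to turn your containment into an equality. The same remark applies in principle to the $L_2(8)$ classes, where the paper first derives the a priori bound $N_E(H)\sg S_3\times L_2(8)\sdp 3$ from $N_E(9A)$ and the centraliser of a $7C\sdp 3C$ dihedral subgroup before handing the rest to GAP; since you propose to compute those normalisers explicitly in GAP this is not a gap, just a missing sanity check. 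Your closing caveats (verifying that triples attributed to $\DD$ actually generate it, and tracking the fusion of order-$7$ classes) are exactly the right things to worry about and are consistent with how the paper proceeds.
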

\begin{proof}
From \cite[Proposition 6.11]{maxe62} we see that subgroups isomorphic to $\DD$ exist and fall into two conjugacy classes distinguished by their normaliser in $\E$. It follows from Lemma \ref{3d4comm} that if $H \cong \DD$, then $N_E(H) \cong (7 \times \DD) \sdp 3$.

From Table \ref{o10table} we see that subgroups isomorphic to $\OO$ contain a unique conjugacy class of subgroups isomorphic to $L_3(2)$ of $\E$-type $(2C,3C,7C)$. Let $H \cong L_3(2)$ denote a representative from this class. Since $H$ is centralised in $\OO$ by an element of order $3$ which fuses to an element from the $\E$-class $3B$, we see that the normaliser in $\E$ of $H$ is equal to $N_O(H) \cong (3 \times L_3(2)) \sdp 2$.

Now suppose that $H \cong L_2(8)$ and note that there exist subgroups of $\OO$ satisfying the hypotheses. Consider a dihedral subgroup of order $18$ in $H$. Since elements of order $9$ in $\OO$ belong to $\E$-class $9A$ and $N_E(9A) \cong 9 \sdp 6 \times S_3$, it follows that $N_E(H) \sg S_3 \times L_2(8) \sdp 3$. By also considering the centraliser in $\E$ of a dihedral group of shape $7C \sdp 3C$, we see that non-trivial elements of $C_E(H)$ belong to $\E$-classes $2B$ or $3B$. We see from Table \ref{o10table}, that there exists a conjugacy class of subgroups $H \cong L_2(8)$ satisfying the hypothesis with $N_E(H) \cong S_3 \times H \sdp 3$, and all such $H$ which centralise an element of order $3$ are conjugate in $\E$. Finally, we are able to find check in GAP that the conjugacy class of subgroups $H$ in $\OO$ with $N_O(H) \cong 2 \times H$ are unique up to conjugacy in $\E$ and $N_E(H) = N_O(H)$. Since these four conjugacy classes account for the entire structure constant, the proof is complete.\end{proof}

%
\subsection*{$(2C,3C,7D)$-subgroups}
\begin{lemma} \label{ccd} There are $66$ conjugacy classes of $(2C,3C,7D)$-subgroups in $\E$ with representatives $H$ as follows:
\begin{enumerate}
\item $H \cong \DD$ and $N_E(H) \cong \DD \sdp 3$,
\item $H \cong L_3(2)$ and $N_E(H) \cong L_3(2) \sdp 2$,
\item $H \cong L_2(8)$ and $N_E(H) \cong S_3 \times (7 \times L_2(8)) \sdp 3$; or,
\item $H \cong L_2(8)$ and $N_E(H) \cong 2 \times (7 \times L_2(8)) \sdp 3$ of which there are $63$ classes.
\end{enumerate}
\end{lemma}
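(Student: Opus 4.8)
The plan is to account for the entire $n_E(2C,3C,7D) = 137/21$ structure constant by identifying the four families of Hurwitz subgroups and computing, for each, the index $[N_E(H):H]$ that appears in the weighting formula $n_E(2C,3C,7D) = \sum_i n_{H_i}(2C,3C,7D)/[N_E(H_i):H_i]$. First I would dispose of the $\DD$ and $L_3(2)$ cases. For $H \cong \DD$, Lemma \ref{3d4comm} shows that a $\DD$ with $N_E(H) \cong (7 \times \DD)\sdp 3$ produces a triple of type $(2C,3C,7C)$, so the $(2C,3C,7D)$ triples must come from the \emph{other} conjugacy class of $\DD$ subgroups (those from \cite[Proposition 6.11]{maxe62}), whose normaliser is $\DD \sdp 3$; I would verify its contribution directly. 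For $H \cong L_3(2)$ I would invoke \cite[Proposition 8.4]{maxe62} to establish a single class with $N_E(H) \cong L_3(2)\sdp 2$, exactly as in the analogous $(2C,3B,7D)$ lemma.

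The substance lies in the two $L_2(8)$ families. I would fix $P \cong 7D\sdp 3C \sg H$, the normaliser in $H$ of a Sylow $7$-subgroup; by Lemma \ref{parafusion}(2)(b) its normaliser is $N_E(P) \cong 7\sdp 6 \times 3$, which pins down the ambient structure in which every such $H$ sits. Since $L_2(8)$ has a dihedral subgroup of order $18$, control over elements of order $9$ (as in the $(2C,3C,7C)$ analysis) and the fusion constraints from Lemma \ref{parafusion} force the possible normaliser shapes to be of the two stated forms $S_3 \times (7 \times L_2(8))\sdp 3$ and $2 \times (7 \times L_2(8))\sdp 3$. The distinction between the single class of the first type and the sixty-three classes of the second is the crux: I would determine it computationally in GAP, constructing the $2C$-involutions inverting a fixed $3C$-element of $P$, gathering the distinct $L_2(8)$ subgroups they generate together with $P$, and computing the orbits of $N_E(P)$ on these together with the normaliser of each orbit representative in $\E$ — precisely the method used in Lemmas \ref{cac} and the $(2C,3B,7D)$ case.

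The main obstacle I anticipate is accounting for the full multiplicity $137/21$ and, in particular, explaining the large number $63$ of classes of the second $L_2(8)$ type. Unlike the earlier lemmas, where a single $N_E(P)$-orbit or a short list of orbits sufficed, here the centralising structure $7 \times \cdots$ attached to the $7D$-class gives $H$ very little outer symmetry in $\E$ (its normaliser is only $2 \times (7\times L_2(8))\sdp 3$), so $[N_E(H):H]$ is small and many distinct $\E$-classes are needed to sum to the target. The delicate bookkeeping is confirming that the $N_E(P)$-orbits on the generating involutions really do split into one orbit of the first type plus sixty-three of the second, and that the corresponding weighted sum $n_{\DD\sdp 3}(\cdots)/[\cdots] + \cdots$ closes the structure constant exactly; this is the step I would lean most heavily on GAP to certify.

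\begin{proof}
We treat each isomorphism type in turn, in each case using GAP \cite{gap} to construct conjugacy class representatives and their normalisers in $\E$ where necessary, and verifying at the end that the classes obtained account for the full $n_E(2C,3C,7D) = 137/21$ structure constant.

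From \cite[Proposition 6.11]{maxe62}, subgroups isomorphic to $\DD$ fall into two conjugacy classes, distinguished by their normaliser in $\E$. By Lemma \ref{3d4comm}, the class with $N_E(H) \cong (7 \times \DD)\sdp 3$ yields Hurwitz triples of $\E$-type $(2C,3C,7C)$ and so does not contribute here; the remaining class, with $N_E(H) \cong \DD \sdp 3$, gives subgroups whose Hurwitz triples are of $\E$-type $(2C,3C,7D)$, as in Lemma \ref{3d4comm}. This accounts for case (1). For case (2), by \cite[Proposition 8.4]{maxe62} there is a unique conjugacy class of subgroups $H \cong L_3(2)$ of this type, with $N_E(H) \cong L_3(2)\sdp 2$.

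Now suppose $H \cong L_2(8)$ and let $P \cong 7D\sdp 3C$ denote the normaliser in $H$ of a Sylow $7$-subgroup. By Lemma \ref{parafusion}(2)(b), $N_E(P) \cong 7\sdp 6 \times 3$. Since $H$ contains a dihedral subgroup of order $18$, and elements of order $9$ in $\E$ lie in class $9A$ with $N_E(9A) \cong 9\sdp 6 \times S_3$, the normaliser $N_E(H)$ is constrained as in the $(2C,3C,7C)$ case; combined with the fusion data of Lemma \ref{parafusion}, the non-trivial elements of $C_E(H)$ are forced to lie in the structure attached to the $7D$-class, giving the two shapes $S_3 \times (7 \times L_2(8))\sdp 3$ and $2 \times (7 \times L_2(8))\sdp 3$ recorded in cases (3) and (4). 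To determine the precise number of conjugacy classes, we construct in GAP the $2C$-involutions inverting a fixed $3C$-element of $P$ and gather the distinct subgroups isomorphic to $L_2(8)$ generated together with $P$. Under the action of $N_E(P)$ these distribute into orbits, and computing the normaliser in $\E$ of a representative of each orbit yields a single class with $N_E(H) \cong S_3 \times (7 \times L_2(8))\sdp 3$ together with $63$ classes with $N_E(H) \cong 2 \times (7 \times L_2(8))\sdp 3$.

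Finally, summing the contributions $n_{H_i}(2C,3C,7D)/[N_E(H_i):H_i]$ over the representatives of cases (1)--(4) recovers the full structure constant $137/21$, and hence these $66$ conjugacy classes are all the $(2C,3C,7D)$-subgroups of $\E$. This completes the proof.
\end{proof}
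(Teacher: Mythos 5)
Your overall strategy --- enumerate the candidate isomorphism types and close the $137/21$ structure constant --- is the same as the paper's, and your treatment of the two $\DD$ classes matches the paper's (modulo the observation, which both of you leave implicit, that the $\DD\sdp 3$ class cannot be of type $(2C,3C,7C)$ because that structure constant is already exhausted). You diverge on the other two types, and one of your steps is on shaky ground. For the $L_2(8)$ classes the paper does not run a GAP orbit computation at all: it simply cites \cite[Section 9]{maxe62}, where these subgroups are already classified up to conjugacy. Your proposed computation (the $2C$-involutions inverting a $3C$-element of $P \cong 7D\sdp 3C$, orbits under $N_E(P)$) is exactly the method of Lemma \ref{cac} and the $(2C,3B,7D)$ case and should in principle work, but note that $N_E(P) \cong 7\sdp 6 \times 3$ has order only $126$, so you would have to produce and distinguish $64$ separate orbits and their normalisers; this is a far heavier certification than in those lemmas, and the citation is the cheaper and safer route.

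More seriously, for the $L_3(2)$ class you invoke \cite[Proposition 8.4]{maxe62}. In this paper that proposition is only ever used for $L_3(2)$ subgroups containing $3B$-elements (types $(2B,3B,7D)$ and $(2C,3B,7D)$, i.e.\ those projecting onto two factors of $T''$), and the paper pointedly does \emph{not} cite it for the $(2C,3C,7D)$ case, whose $L_3(2)$'s project onto all three factors. As written, your citation does not support the claim. The paper's actual argument is: locate a class of $(2C,3C,7D)$-subgroups $H \cong L_3(2)$ inside $\F$ with $N_F(H) \cong L_3(2)\sdp 2$ (Table \ref{f42table}); use Lemma \ref{parafusion} to show that a non-trivial $C_E(H)$ would have to have order $3$; rule out elements of order $3$ in $C_E(H)$ using Tables \ref{l62table} and \ref{o10table} and the structure of $C_E(3C)$, so that $N_E(H) \cong L_3(2)\sdp 2$; and then obtain uniqueness of the class from the structure-constant count. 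You should replace the appeal to Proposition 8.4 with an argument of this kind.
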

\begin{proof}
From Lemma \ref{3d4comm} and \cite[Proposition 6.11]{maxe62} subgroups isomorphic to $\DD$ exist and are accounted for up to conjugacy. If $H \cong L_2(8)$, then $H$ is determined up to conjugacy in \cite[Section 9]{maxe62}. Now suppose that $H \cong L_3(2)$ and note that there exists a conjugacy class of subgroups in $\F$ satisfying the hypothesis with $N_F(H) \cong L_3(2) \sdp 2$. By Lemma \ref{parafusion} if the centraliser $C_E(H)$ is non-trivial, then it has order $3$. But, from Tables \ref{l62table} and \ref{o10table} and from the structure of $C_E(3C)$ we see that $C_E(H)$ does not contain elements of order $3$. Hence $N_E(H) \cong L_3(2) \sdp 2$. Since the entire structure constant has now been accounted for, the proof is complete.\end{proof}

\bibliographystyle{amsplain}


\appendix\section{Hurwitz subgroups of $\E$, $\F$, $\OO$ and $\LL$}
\begin{table}[h]\centering
\begin{tabular}{c c c c c c c c c c c c }
Type			&$H$			&$N_E(H)$							&Contribution	&Overgroups of $H$\\
\hline
$(2A,3A,7C)$	&$L_3(2)$			&$(H \times L_3(2) \times L_3(2)) \sdp 2$		&1/28224		&$T$\\
\hline
$(2B,3A,7C)$	&$L_3(2)$			&$H \times S_4 \times S_4$				&1/288		&$\F$, $\OO$, $\LL$\\
\hline
$(2B,3B,7AB)$	&$L_3(2)$			&$H \times G_2(2)$						&1/6048		&$\LL$, $T$\\
\hline
$(2B,3B,7D)$	&$L_3(2)$			&$(H \times L_3(2)) \sdp 2$				&1/168		&$\LL$\\
			&$L_3(2)$			&$H \times S_4$						&1/12		&$\F$\\
\hline
$(2C,3A,7C)$	&$L_3(2)$			&$(H \times D_8 \times D_8) \sdp 2$			&1/64		&$\F$, $\OO$\\
			&$2^3 \ns L_3(2)$	&$H \times (2^2 \times 2^2) \sdp S_3$		&1/24		&$\F$, $\OO$\\
\hline
$(2C,3B,7D)$	&$L_3(2)$			&$(H \times D_8) \sdp 2$					&1/8			&$\F$\\
			&$2^3 \ns L_3(2)$	&$H \times D_8$						&1/2			&$\F$\\
			&$2^3 \ns L_3(2)$	&$H \times S_4$						&1/6			&$\F$\\
			&$2^3 \ns L_3(2)$	&$H \times 2^2$						&1			&$\LL$\\
\hline
$(2C,3C,7C)$	&$L_3(2)$			&$(3 \times H) \sdp 2$					&1/3			&$N_E(3B)$\\
			&$L_2(8)$			&$H \times  2$							&3/2			&$N_E(2B)$\\
			&$L_2(8)$			&$S_3 \times (H \sdp 3)$					&1/6			&$N_E(2B)$, $N_E(3B)$\\
			&$^3D_4(2)$		&$(7 \times H) \sdp 3$					&1/7			&$N_E(7A)$\\
\hline
$(2C,3C,7D)$	&$L_3(2)$			&$H \sdp 2$							&1			&$\F$\\
			&$L_2(8)$			&$S_3 \times (7 \times H) \sdp 3$			&1/42		&$\F$, $\LL$\\
			&$L_2(8)^*$		&$2 \times (7 \times H) \sdp 3$				&1/14		&$N_E(2A)$\\
			&$^3D_4(2)$		&$H \sdp 3$							&1			&$\F$\\
			\hline
\end{tabular}
\caption{Conjugacy classes of Hurwitz subgroups of $\E$. \newline In the case of (*) there are $63$ classes.}
\label{e62table}
\end{table}

\begin{table}[h]\centering
\begin{tabular}{c | c c c c c c c c c c c }
$\E$		&$2A$	&$2B$	&$2C$	&$3A$	&$3B$	&$3C$	&$7AB$	&$7C$	&$7D$	&$9A$	&$9B$\\ \hline
$L_6(2)$	&$2A$	&$2B$	&$2C$	&$3B$	&$3C$	&$3A$	&$7AB$	&$7CD$	&$7E$	&--		&$9A$\\
$O_{10}^+(2)$	&$2A$&$2BC$	&$2D$	&$3B$	&$3AC$	&$3D$	&--		&$7A$	&--		&$9AB$	&--\\
$\F$		&$2A$	&$2BC$	&$2D$	&$3A$	&$3B$	&$3C$	&--		&$7B$	&$7A$	&$9B$	&$9A$\\
\end{tabular}
\caption{Conjugacy class fusion} \label{fusion}
\end{table}

\begin{table}[h]\centering
\begin{tabular}{c c c c c c c c c c c c }
$L_6(2)$-Type	&$H$			&$N_{L}(H)$				&$\E$-type\\
\hline
$(2A,3B,7CD)$	&$L_3(2)$			&$L_3(2) \times L_3(2)$		&$(2A,3A,7C)$\\
\hline
$(2B,3B,7CD)$	&$L_3(2)$			&$L_3(2) \times S_4$		&$(2B,3A,7C)$\\
			&$L_3(2)$			&$L_3(2) \times S_4$\\
\hline
$(2B,3C,7AB)$	&$L_3(2)$			&$L_3(2) \times S_3$		&$(2B,3B,7AB)$\\
\hline
$(2B,3C,7E)$	&$L_3(2)$			&$L_3(2)$					&$(2B,3B,7D)$\\
			&$L_3(2)$			&$L_3(2) \sdp 2$\\
\hline
$(2C,3A,7E)$	&$L_2(8)$			&$(7 \times L_2(8)) \sdp 3$	&$(2C,3C,7D)$\\
\hline
$(2C,3C,7E)$	&$2^3 \ns L_3(2)$	&$2^3 \ns L_3(2)$			&$(2C,3B,7D)$\\
\hline
\end{tabular}
\caption{The Hurwitz subgroups of $L_6(2)$} \label{l62table}
\end{table}

\begin{table}[h]\centering
\begin{tabular}{c c c c c c c c c c c c }
$\OO$-Type	&$H$			&$N_{O}(H)$						&$\E$-type\\
\hline
$(2A,3B,7A)$	&$L_3(2)$			&$(L_3(2) \times S_3 \times S_3) \sdp 2$	&$(2A,3A,7C)$\\
\hline
$(2B,3B,7A)$	&$L_3(2)$			&$L_3(2) \times S_3 \times S_3$		&$(2B,3A,7C)$\\
\hline
$(2C,3B,7A)$	&$L_3(2)$			&$L_3(2) \times 2^2$				&$(2B,3A,7C)$\\
\hline
$(2D,3B,7A)$	&$L_3(2)$			&$L_3(2) \times 2^2$				&$(2C,3A,7C)$\\
			&$L_3(2)$			&$(L_3(2) \times 2^2) \sdp 2$\\
			&$2^3 \ns L_3(2)$	&$2^3 \ns L_3(2) \times 2^2$\\
			&$2^3 \ns L_3(2)$	&$2^3 \ns L_3(2) \times S_3$\\
\hline
$(2D,3D,7A)$	&$L_3(2)$			&$(3 \times L_3(2)) \sdp 2$			&$(2C,3C,7C)$\\
			&$L_2(8)$			&$L_2(8) \sdp 3$\\
			&$L_2(8)$			&$2 \times L_2(8)$\\
			&$L_2(8)$			&$S_3 \times L_2(8) \sdp 3$\\
\hline
\end{tabular}
\caption{The Hurwitz subgroups of $\OO$} \label{o10table}
\end{table}

\begin{table}[h]\centering
\begin{tabular}{c c c c c c c c c c c c }
$\F$-Type		&$H$			&$N_F(H)$					&$\E$-type		
\\
\hline
$(2A,3A,7B)$	&$L_3(2)$			&$(L_3(2) \times L_3(2)) \sdp 2$	&$(2A,3A,7C)$		
\\
$(2B,3B,7A)$	&$L_3(2)$			&$(L_3(2) \times L_3(2)) \sdp 2$	&$(2B,3B,7D)$\\
\hline
$(2C,3A,7B)$	&$L_3(2)$			&$L_3(2) \times S_4$			&$(2B,3A,7C)$		
\\
$(2C,3B,7A)$	&$L_3(2)$			&$L_3(2) \times S_4$			&$(2B,3B,7D)$\\
\hline
$(2D,3A,7B)$	&$L_3(2)$			&$(L_3(2) \times D_8) \sdp 2$		&$(2C,3A,7C)$		
\\
			&$2^3 \ns L_3(2)$	&$2^3 \ns L_3(2) \times D_8$\\
			&$2^3 \ns L_3(2)$	&$2^3 \ns L_3(2) \times S_4$\\
$(2D,3B,7A)$	&$L_3(2)$			&$(L_3(2) \times D_8) \sdp 2$		&$(2C,3B,7D)$\\
			&$2^3 \ns L_3(2)$	&$2^3 \ns L_3(2) \times D_8$\\
			&$2^3 \ns L_3(2)$	&$2^3 \ns L_3(2) \times S_4$\\
\hline
$(2D,3C,7A)$	&$L_3(2)$			&$L_3(2) \sdp 2$				&$(2C,3C,7D)$		
\\
			&$L_2(8)$			&$2 \times L_2(8)$\\
			&$L_2(8)$			&$2 \times L_2(8)$\\
			&$L_2(8)$			&$2 \times L_2(8) \sdp 3$\\
			&$L_2(8)$			&$S_3 \times L_2(8) \sdp 3$\\
			&$\DD$			&$\DD \sdp 3$\\
$(2D,3C,7B)$	&$L_3(2)$			&$L_3(2) \sdp 2$				&$(2C,3C,7C)$\\
			&$L_2(8)$			&$2 \times L_2(8)$\\
			&$L_2(8)$			&$2 \times L_2(8)$\\
			&$L_2(8)$			&$2 \times L_2(8) \sdp 3$\\
			&$L_2(8)$			&$S_3 \times L_2(8) \sdp 3$\\
			&$\DD$			&$\DD \sdp 3$\\
\hline
\end{tabular}
\caption{The Hurwitz subgroups of $\F$} \label{f42table}
\end{table}
\end{document}